\documentclass[10pt, reqno]{amsart}
\usepackage{amsmath}
\usepackage[margin=1.0in]{geometry}
\usepackage[foot]{amsaddr}
\usepackage{amssymb}
\usepackage{physics}
\usepackage{mathrsfs}
\usepackage{bbold}
\usepackage{enumerate}         
\usepackage{color}
\usepackage{cases}
\usepackage{url} 
\usepackage{amsthm}
\usepackage{bm}
\usepackage{xy}
\usepackage[ruled,vlined]{algorithm2e}
\usepackage{mathrsfs}
\usepackage[gen]{eurosym}
\usepackage{graphicx}
\usepackage{comment}
\usepackage{dsfont}

\usepackage[colorinlistoftodos]{todonotes}

\theoremstyle{plain}
\newtheorem{theorem}{Theorem}[section]

\newtheorem{corollary}[theorem]{Corollary}

\newtheorem{lemma}[theorem]{Lemma}

\newtheorem{proposition}[theorem]{Proposition}

\newtheorem{assumption}[theorem]{Assumption}
\theoremstyle{remark}
\newtheorem{remark}[theorem]{Remark}
\newtheorem{example}[theorem]{Example}

\title[A Bayesian Sequential Soft Classification Problem for a B.M.'s Drift]{A Bayesian Sequential Soft Classification Problem for a Brownian Motion's Drift}
\author{Steven Campbell and Yuchong Zhang}
\address{\textit{Corresponding Author:} Steven Campbell, Dept.\ of Statistics, Columbia University}
\email{sc5314@columbia.edu}
\date{\today}

\begin{document}
	
	\maketitle
	\vspace{-0.5cm}
	\begin{abstract}
		In this note we introduce and solve a soft classification version of the famous Bayesian sequential testing problem for a Brownian motion's drift. We establish that the value function is the unique non-trivial solution to a free boundary problem, and that the continuation region is characterized by two boundaries which may \textit{coincide} if the observed signal is not strong enough. By exploiting the solution structure we are able to characterize the functional dependence of the stopping boundaries on the signal-to-noise ratio. We illustrate this relationship and compare our stopping boundaries to those derived in the classical setting.
	\end{abstract}
	
	{\small\noindent \emph{Keywords:} soft-classification; sequential testing; optimal stopping; filtering; free-boundary problems}
	
	{\small\noindent \emph{AMS 2020 Subject Classification:} 60G35, 	60G40, 62L10, 62L15}
	
	\section{Introduction and problem set-up}
		A classic continuous-time problem is the sequential testing of the drift of an arithmetic Brownian motion \cite{gapeev2004wiener,shiryaev1967two,shiryaev2007optimal}. As in that problem, we begin with a stochastic basis $(\Omega,\mathcal{F},\mathbb{F},\mathbb{P})$ supporting a Brownian motion $W$ and a binary random variable $\theta\in\{0,1\}$ independent of $W$. There is an observer who wants to make an inference about $\theta$, but only sees the noisy signal process $X_t=\alpha\theta t+\sigma W_t$,
	where $\alpha\in \mathbb{R}\setminus\{0\}$, $\sigma>0$.
	
	To impose a Bayesian formulation, a prior probability $P(\theta = 1) = \pi$ for $\pi\in(0,1)$ is enforced. The observer may take stock of the information in $X$, until some self-determined $\mathbb{F}^X$-stopping time\footnote{We define $\mathbb{F}^X=(\mathcal{F}_t^X)_{t\geq0}$ to be the filtration generated by the process $X$.} $\tau$. As $X$ is observed, the belief about $\theta$ is updated. We denote by $\Pi$ the posterior probability process $\Pi_t:=\mathbb{P}(\theta=1|\mathcal{F}^X_t)$. Standard arguments from filtering theory lead to the Markovian dynamics
	\[d\Pi_t=\frac{\alpha}{\sigma}\Pi_t(1-\Pi_t)d\overline{W}_t, \ \ \ \Pi_0=\pi,\]
	for the posterior probability process where $\overline{W}_t:=\sigma^{-1}\left(X_t-\alpha\int_0^t\Pi_sds\right)$
	is an $\mathbb{F}^X$-Brownian motion known as the innovation process (c.f. \cite[ (21.0.9)-(21.0.10)]{peskir2006optimal} where the same process is introduced).
	
	In the classic problem, the goal of the observer is to test sequentially the hypotheses $H_0: \theta=0$ and $H_1: \theta=1$ given the information gleaned from the signal process.  When determining the state of nature $\theta$, the observer's response consists of the time $\tau$ the decision is made, and the decision $d \in \{0, 1\}$ itself. The observer then faces the Bayes risk:
	\begin{equation}\label{eqn:classic.bayes.risk}
		\mathbb{E}_\pi\left[c\tau+a_1\mathds{1}_{\{d=0,\theta=1\}}+a_2\mathds{1}_{\{d=1,\theta=0\}}\right],
	\end{equation}
	and searches for a minimizing pair $(\tau^*,d^*)$. Here $a_1,a_2,c>0$ are constants with $c$ representing the cost of observation, and $a_1$ (resp. $a_2$) representing the cost of making a type 1 (resp. type 2) error. 
	
	This is a hard classification problem since a definite decision about the state of nature must be made at $\tau$ even if the observer is not certain. In this work, we relax this requirement to allow for a soft-classification approach. The modified Bayes risk we consider is,
	\begin{equation}\label{eqn:bayes.risk.soft}\mathbb{E}_\pi \left[c\tau+\mathscr{L}(\theta,\Pi_{\tau})\right],
	\end{equation}
	where $\mathscr{L}:\{0,1\}\times[0,1]\to\mathbb{R}_+$ is a loss function satisfying $\mathscr{L}(v,\pi)=0$ if and only if $v=\pi$. By iterated conditioning, we can remove the unknown variable $\theta$ in the loss and write
	\begin{equation}\label{eqn:iter.cond.bayes.risk}
		\mathbb{E}_\pi \left[\mathscr{L}(\theta,\Pi_{\tau})\right] 
		=\mathbb{E}_\pi \left[ \mathbb{E}_\pi \left[\mathscr{L}(0,\Pi_{\tau})\mathds{1}_{\theta = 0}+ \mathscr{L}(1,\Pi_{\tau})\mathds{1}_{\theta = 1}\big|\mathcal{F}^{X}_{\tau}\right]\right]=\mathbb{E}_\pi[g(\Pi_{\tau})],
	\end{equation}
	where
	\[g(\pi):=\pi\mathscr{L}(1,\pi)+(1-\pi)\mathscr{L}(0,\pi).\]
	It is easy to see that $g(0)=g(1)=0$, i.e.\ certainty about the state of nature incurs no penalty. 
	
	This representation allows us to use a Markovian approach for the optimal stopping problem
	\begin{equation}\label{eqn:problem}
		V_*(\pi):=\inf_{\tau\in\mathcal{T}}\mathbb{E}_\pi \left[c\tau+g(\Pi_{\tau})\right],
	\end{equation}
	where $\mathcal{T}$ is the set of (a.s. finite) $\mathbb{F}^\Pi$-stopping times.
	A similar argument in the classic problem allows one to get the same representation, but with the special penalty function:
	\begin{equation}\label{eqn:classic.penalty}
		g(\pi)=a_1\pi\wedge a_2(1-\pi).
	\end{equation}
	Unlike the soft classification functions that we will study here, this function has a kink which arises at the point $\pi=\frac{a_2}{a_1+a_2}$. We will see in the analysis to follow that this leads to non-trivial differences in the solution structure (see Section \ref{sec:illustration} for an illustration).
	
	We will require that the penalty function $g(\cdot)$ induced by the soft-classification loss satisfies some simple properties. The following assumption, which depends on the operator $\mathcal{A}$,
	\begin{equation}
		(\mathcal{A} f)(\pi) :=\frac{1}{2}\pi^2(1-\pi)^2 f''(\pi), \quad f\in C^2(0,1),
	\end{equation}
	is enforced throughout this paper.\footnote{The operator $\mathcal{A}$ is related to the infinitesimal generator $\mathcal{L}$ of $\Pi$ by $\mathcal{L} = (\alpha^2/\sigma^2) \mathcal{A}$.} 
	\begin{assumption}\label{ass:g.scst}\
		\begin{itemize}
			\itemsep = 0em
			\item[(G1)] $g\in C^2(0,1)$ is concave and satisfies $g(0)=g(1)=0$;
			\item[(G2)] There exists a $\pi_0\in(0,1)$ such that $\mathcal{A}g$ is strictly decreasing on $(0,\pi_0)$ and strictly increasing on $(\pi_0,1)$.
		\end{itemize}
	\end{assumption}
	Condition (G2) is a technical condition that ensures the stopping time solution to \eqref{eqn:problem} is well behaved. Both of these conditions are satisfied for the penalty induced by a host of well-known losses. Two examples are highlighted here.
	
	\begin{example}[Cross-entropy loss]\label{ex:g-cross-entropy.scst}
		Suppose the loss is given by $\mathscr{L}(\theta,\pi)=-a_1\theta\log(\pi)-a_2(1-\theta)\log(1-\pi)$ for $a_1,a_2>0$. This is a potentially asymmetric version of the popular cross-entropy loss. The induced penalty function is given on $(0,1)$ by $g(\pi)=-a_1\pi\log(\pi)-a_2(1-\pi)\log(1-\pi)$,
		and we naturally extend the definition of $g$ to the points $\pi=0$ and $\pi=1$ by taking limits.
	\end{example}
	
	\begin{example}[$L_1$ and $L_2$ losses]\label{ex:g-l1-l2.scst}
		The induced penalty function for the $L_1$ loss $\mathscr{L}(\theta,\pi)=\left|\theta-\pi\right|$ is $g(\pi)=2\pi(1-\pi)$. For the $L_2$ loss $\mathscr{L}(\theta,\pi)=\left(\theta-\pi\right)^2$, we recover a constant multiple of the $L_1$ penalty; $g(\pi)=\pi(1-\pi)$.
	\end{example}
	
		Soft-classification losses have experienced an increase in popularity due to their utility in machine learning. When incorporated into the sequential testing framework, we will find that this class of problems has a nice solution structure. The optimal stopping time $\tau$ will be characterized as the first hitting time to two constant boundaries which depend on the information ratio $\alpha/\sigma$ and form part of the solution to a free boundary problem. The existence and uniqueness of the solution has a geometric interpretation related to Bisztriczky's Theorem \cite{bisztriczky1990separated} which (in the special case of 2 dimensions) says that any two strictly separated convex bodies are supported by exactly two tangents. This result was first proved\footnote{According to \cite{lewis1996common}, Bisztriczky declared that his initial proof was incomplete.} by \cite{cappell1994common} and later in novel ways by \cite{castillo2021common,lewis1996common}, amongst others.
	
	The remainder of this note is organized as follows. Section \ref{sec:lit.rev} provides a brief overview of the related literature. Section \ref{sec:free.bdy.problem} studies a free-boundary problem associated with the soft classification formulation and demonstrates that the solution is \textit{as tractable} as its counterpart in the classic problem. Section \ref{sec:verification} then verifies that the value function can be characterized as the unique solution to this free boundary problem and that the optimal stopping boundaries satisfy a pair of transcendental equations. Through this representation, we are able to observe that the value function can be written in terms of the convex envelope of a related function. In Section \ref{sec:inf.ratio} we study the behavior of the optimal stopping boundaries as the information ratio is varied. Finally, in Section \ref{sec:illustration} we illustrate the solution using several sample loss functions, and compare the results with those obtained in the hard-classification problem. Section \ref{sec:inf.hor.conclusion} concludes and suggests directions for future work.

	\subsection{Related Literature}\label{sec:lit.rev}
	This problem belongs to the field of sequential analysis which covers a broad class of statistical problems where data is collected and analyzed sequentially over time, rather than all at once. The birth of the theory can be credited to Wald's seminal 1945 paper \cite{wald1945sequential} which formally introduced sequential hypothesis testing and the Sequential Probability Ratio Test (SPRT). However, in  this treatise Wald himself attributes the first conceptualization of a sequential test to a work of Dodge and Romig \cite{dodge1929method} which dates back even earlier to 1929. The decades that followed Wald's early work were marked by many important developments for the theory. For instance, Wald and Wolfowitz \cite{wald1950bayes,wald1948optimum} proved the optimality of the SPRT and studied Bayes solutions of sequential testing problems, Kiefer and Weiss \cite{kiefer1957some} studied properties of generalized SPRTs alongside a related sample size minimization problem, and Chernoff \cite{chernoff1959sequential} addressed a procedure for the sequential design of experiments. The theory of sequential decision making for stochastic processes in continuous time relevant to this work was introduced in \cite{dvoretzky1953sequential}.  
	
	The classic investigation into the drift of a signal process has received sustained interest since initial works like \cite{shiryaev1967two}. It is studied in the finite horizon setting in \cite{gapeev2004wiener} and extended to a signal process whose drift has a state dependence in \cite{gapeev2011sequential}.  The paper \cite{ekstrom2015bayesian} treats finite and infinite horizon problems with general (i.e. not necessarily binary) drifts and prior distributions for the unknown state $\theta$. The case of the $L_2$ penalty in \eqref{eqn:bayes.risk.soft}, also for general $\theta$, is studied in \cite{ekstrom2022bayesian} by cleverly exploiting the quadratic structure. A version of this $L_2$ problem that incorporates both control and stopping can be found in \cite{ekstrom2021sequential}. The recent work \cite{xu2023decision} has also studied a related sequential decision making problem with two information regimes. It is worth noting that problems with linear costs of observations like \eqref{eqn:problem} are also considered in \cite{irle2004solving} using probabilistic techniques. Here, we hope to obtain explicit characterizations of the value function and stopping boundaries that mimic those obtained for the classic problem (c.f. \cite[Theorem 21.1]{peskir2006optimal}) and this leads us to pursue a different approach.

	\section{The free boundary problem}\label{sec:free.bdy.problem}
	
	As in the classic problem, we will begin by guessing the form of the solution and then verifying that it is correct. From the form of the value function, we suspect that there should be an $A,B\in(0,1)$ such that
	\[\tau_{A,B}:=\inf\{t\geq0:\Pi_t\not\in(A,B)\}\]
	is an optimal stopping time for our problem. 	Let $K:=\alpha^2/(c\sigma^2)>0$ be a simplifying constant. Standard arguments based on the strong Markov property then lead to the following free-boundary problem for an unknown function $V$ and unknown points $0\leq A\leq B\leq 1$:
	\begin{numcases}{}
		\mathcal{A}V(\pi)=-K^{-1} & $\pi\in(A,B)$ \label{eqn:fb1}\\ 
		V(A)=g(A) & \label{eqn:fb2}\\
		V(B)=g(B) & \label{eqn:fb3}\\
		V'(A)=g'(A) & \label{eqn:fb4}\\
		V'(B)=g'(B) & \label{eqn:fb5}\\
		V<g & $\pi\in(A,B)$ \label{eqn:fb6}\\
		V=g & $\pi\in[0,A]\cup[B,1]\label{eqn:fb7}$.
	\end{numcases}
	
	This can be compared with the related formulation in the hard classification setting in \cite[Chapter VI, Section 21.1]{peskir2006optimal}. In the remainder of this section we will characterize the existence and uniqueness of the solution to the above free-boundary problem.
	
	It can be easily verified that for any $A\in(0,1)$ the unique solution to \eqref{eqn:fb1}, \eqref{eqn:fb2}, and \eqref{eqn:fb4} is given by the function
	\begin{equation}\label{eqn:V.pi.A}
		V(\pi;A)=2K^{-1}\Psi(\pi)+H'(A)\pi+H(A)-AH'(A)
	\end{equation}
	where 
	\begin{equation}\label{eqn:Psi}
		\Psi(\pi):=(1-2\pi)\log\left(\frac{\pi}{1-\pi}\right),
	\end{equation}
	and 
	\begin{equation}\label{eqn:H}
		H(\pi):=g(\pi)-2K^{-1}\Psi(\pi).
	\end{equation}
	Equations \eqref{eqn:fb3} and \eqref{eqn:fb5} in terms of the form of $V(\cdot;A)$ read,
	\begin{equation*}
		2K^{-1}\Psi(B)+H'(A)B+H(A)-AH'(A)=g(B) \ \ \ \text{and} \ \ \ 
		2K^{-1}\Psi'(B)+H'(A)=g'(B).
	\end{equation*}
	Then, by using \eqref{eqn:H} we immediately see that they can be equivalently written as
	\begin{equation}\label{eqn:H.tangent.secant}
		H(B)-H(A)=H'(A)(B-A),
	\end{equation}
	\begin{equation}\label{eqn:Hprime.equality}
		H'(A)=H'(B).
	\end{equation}
	Using these definitions, we state here our main theorem which summarizes up front the analysis to follow.
	
	\begin{theorem}\label{thm:free.bdy}
		The free boundary problem \eqref{eqn:fb1}-\eqref{eqn:fb7} admits the trivial solution $V\equiv g$ with any $A= B\in(0,1)$, and a unique non-trivial solution $V$ in $C^2((0,1)\setminus\{A^*,B^*\})\cap C^1(0,1)$ with boundaries $A^*<B^*$ if and only if $\mathcal{A}g(\pi_0)<-K^{-1}$ for $\pi_0\in(0,1)$ from (G2). Moreover, this solution takes the form
		\begin{equation}\label{eqn:sol.free.bdy} V(\pi)=\begin{cases}
				V(\pi;A^*), & \pi\in (A^*,B^*)\\
				g(\pi), & \pi\in [0,A^*]\cup[B^*,1]
			\end{cases}
		\end{equation}
		where $V(\pi;A^*)$ is given by \eqref{eqn:V.pi.A}, and $A^*$ and $B^*$ are the unique solutions to the equations \eqref{eqn:H.tangent.secant}-\eqref{eqn:Hprime.equality} subject to the constraints $A^*\leq \pi_*$, $B\geq\pi^*$ for $\pi_*<\pi^*$ the unique boundary points of the set
		\begin{equation}\label{eqn:set.U}
			U:=\{\pi\in(0,1): \mathcal{A}g(\pi)<-K^{-1}\}.
		\end{equation}
	\end{theorem}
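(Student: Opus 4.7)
\quad The overall strategy is to recast the free boundary system as a double-tangent problem for the function $H$ defined in \eqref{eqn:H}, analyze the convexity of $H$ using (G2), and then pin down the tangent points via a one-parameter monotonicity argument.

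The trivial solution $V\equiv g$ with $A=B$ is immediate since the continuation region is empty. For the non-trivial case, \eqref{eqn:fb1} together with the left boundary conditions \eqref{eqn:fb2} and \eqref{eqn:fb4} pins $V$ to the family $V(\,\cdot\,;A)$ of \eqref{eqn:V.pi.A}; substituting this into \eqref{eqn:fb3} and \eqref{eqn:fb5} and using the definition of $H$ reduces those to the common-tangent equations \eqref{eqn:H.tangent.secant}--\eqref{eqn:Hprime.equality}. Geometrically, if $L_A$ denotes the tangent line to $H$ at $A$, then $V(\pi;A)-g(\pi)=L_A(\pi)-H(\pi)$, so \eqref{eqn:fb6} is equivalent to $H>L_A$ on $(A,B)$. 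The problem thus reduces to finding a common tangent of $H$ lying strictly below its graph in between.

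Next I would analyze the shape of $H$. A direct computation gives $H''(\pi)=g''(\pi)+2K^{-1}/(\pi(1-\pi))^2$, so $\{H''<0\}$ coincides with $U$. Under (G2) either $\mathcal{A}g(\pi_0)\geq -K^{-1}$ and $U=\emptyset$, in which case $H$ is strictly convex, $H'(A)=H'(B)$ forces $A=B$, and no non-trivial solution exists (the ``only if'' direction); or $\mathcal{A}g(\pi_0)<-K^{-1}$ and $U=(\pi_*,\pi^*)$ is a single open interval by the monotonicity of $\mathcal{A}g$ across $\pi_0$. In the latter case $H$ is strictly convex on $(0,\pi_*)\cup(\pi^*,1)$, strictly concave on $(\pi_*,\pi^*)$, and $H(\pi)\to +\infty$ at both endpoints since $\Psi\to -\infty$ there while $g$ stays bounded. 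Setting $s_*:=H'(\pi_*)$ and $s^*:=H'(\pi^*)<s_*$, the shape of $H'$ yields a unique $A_0\in(0,\pi_*)$ with $H'(A_0)=s^*$ and, for each $A\in[A_0,\pi_*]$, a unique $B(A)\in[\pi^*,1)$ with $H'(B(A))=H'(A)$. Any non-trivial tangent point must lie in a strictly convex region of $H$ (otherwise $L_A$ would lie above $H$ on a one-sided neighborhood and violate \eqref{eqn:fb6}), so necessarily $A\in[A_0,\pi_*)$ and $B=B(A)$.

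Uniqueness then reduces to finding the unique zero of
\[
\psi(A):=H(B(A))-H(A)-H'(A)(B(A)-A), \qquad A\in[A_0,\pi_*].
\]
Differentiating and using $H'(B(A))=H'(A)$ cancels the $B'(A)$ contributions and yields $\psi'(A)=-H''(A)(B(A)-A)$, strictly negative on $(A_0,\pi_*)$ by strict convexity of $H$ there. An endpoint analysis produces $\psi(A_0)>0$ (convexity on $(A_0,\pi_*)$ places $L_{A_0}$ strictly below $H$ there, and $H'>s^*=L_{A_0}'$ on $(\pi_*,\pi^*)$ further pushes $H-L_{A_0}$ up to the evaluation point $\pi^*=B(A_0)$) and $\psi(\pi_*)<0$ (by integrating $H'<s_*$ over $(\pi_*,B(\pi_*))$). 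IVT combined with strict monotonicity of $\psi$ yields a unique $A^*\in(A_0,\pi_*)$ with $\psi(A^*)=0$, and setting $B^*:=B(A^*)\in(\pi^*,1)$ gives the unique non-trivial pair; the $C^2$/$C^1$ regularity asserted in the theorem is built into the construction via \eqref{eqn:fb2}--\eqref{eqn:fb5}. Finally, \eqref{eqn:fb6} is verified by noting that $H-L_{A^*}$ vanishes with zero derivative at both $A^*$ and $B^*$, is strictly convex on $(A^*,\pi_*)\cup(\pi^*,B^*)$, strictly concave on $(\pi_*,\pi^*)$, and a short sign analysis of its derivative (positive just inside $A^*$, negative just inside $B^*$, monotone on each convexity regime) forces strict positivity on the whole of $(A^*,B^*)$. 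The main obstacle in executing this plan is the endpoint sign analysis for $\psi$: each inequality is individually straightforward, but carrying them out requires careful simultaneous bookkeeping of $H$ and $H'$ across all three convexity regimes.
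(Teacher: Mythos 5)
Your proposal is correct and reaches the same conclusion as the paper, but it goes through a genuinely different uniqueness argument.

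The common ground: both you and the paper reduce \eqref{eqn:fb1}--\eqref{eqn:fb7} to finding a common tangent line to $H$ at points $A<B$ lying in the two convex flanks of $H$ (the set $U=\{H''<0\}$ equals $(\pi_*,\pi^*)$, and $H$ is strictly convex on $(0,\pi_*)\cup(\pi^*,1)$, which is the paper's Corollary \ref{cor:convexity.concavity.H}). Your identity $V(\pi;A)-g(\pi)=L_A(\pi)-H(\pi)$ (where $L_A$ is the tangent to $H$ at $A$) is exactly the paper's geometric reformulation, and your terminal sign analysis of $H-L_{A^*}$ via the three convexity regimes matches the paper's argument with $F(\pi)=H'(A^*)-H'(\pi)$.

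Where you diverge is the core existence-and-uniqueness step. The paper invokes Bisztriczky's theorem (via its Lemma \ref{lem:common.tangents.convex.func}, constructing compact convex bodies $\mathscr{F}_X,\mathscr{F}_Y$ around the graphs of $f_0,f_1$ and reasoning about which of the two guaranteed common tangents is the relevant one). You instead parametrize the candidate pairs by $A\in[A_0,\pi_*]$ with $B(A)$ determined by $H'(B(A))=H'(A)$, and study $\psi(A)=H(B(A))-H(A)-H'(A)(B(A)-A)$. The cancellation $\psi'(A)=-H''(A)(B(A)-A)<0$, together with the endpoint signs $\psi(A_0)>0$ and $\psi(\pi_*)<0$, gives uniqueness by the IVT with strict monotonicity. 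This is more elementary and self-contained (no external geometric theorem), at the cost of having to carry out the endpoint bookkeeping carefully — but your endpoint arguments check out: $\psi(A_0)>0$ because $H-L_{A_0}$ is increasing on $(A_0,\pi^*)$, and $\psi(\pi_*)<0$ by integrating $H'<s_*$ over $(\pi_*,\overline\pi)$. Your $A_0$ is the paper's $\underline\pi$ and your $B(\pi_*)$ is the paper's $\overline\pi$.

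Your handling of the ``no non-trivial solution'' direction is also cleaner than the paper's: when $\mathcal{A}g(\pi_0)\ge -K^{-1}$, (G2) gives $H''>0$ off a single point, so $H'$ is strictly increasing and \eqref{eqn:Hprime.equality} forces $A=B$. The paper instead does a direct integration comparison of $V(\cdot;A)$ against $g$.

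One small remark: you justify $H'(0+)=-\infty$, $H'(1-)=+\infty$ indirectly via $H\to+\infty$ at the endpoints combined with convexity near the boundary. That does work, but the paper proves these limits directly (Lemma \ref{lem:Hprime}), and you should either cite that or spell out the one-line deduction from boundedness of $H$ on $(0,\pi_*)$ under a finite $H'(0+)$. Otherwise the argument is complete.
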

	
	First, we formally investigate the set $U$ appearing in \eqref{eqn:set.U} and its boundary points. Note that by (G2), $\pi\mapsto\mathcal{A}g(\pi)$ is a univariate, unimodal function. This exactly says that it is quasi-convex and so the set $U$ is convex. By continuity it is open, and when $\mathcal{A}g(\pi_0)<-K^{-1}$ it is necessarily nonempty. We also claim that the boundary points $\pi_*$ and $\pi^*$ must always be in $(0,1)$. The next lemma verifies that $\lim_{\pi\downarrow0}\mathcal{A}g=\lim_{\pi\uparrow 1}\mathcal{A}g=0$ so that, eventually, points near $0$ and $1$ satisfy $\mathcal{A}g>-K^{-1}$.
	
	\begin{lemma}\label{lem:Ag.limits}
		$\lim_{\pi\downarrow0}\mathcal{A}g(\pi)=\lim_{\pi\uparrow 1}\mathcal{A}g(\pi)=0$.
	\end{lemma}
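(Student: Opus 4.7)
The plan is to combine the boundedness of $g$, which follows from concavity and the vanishing boundary values, with a tangent-line inequality to rule out a non-zero limit. First, $\mathcal{A}g \leq 0$ on $(0,1)$ since $g'' \leq 0$, and (G2) gives $\mathcal{A}g \geq \mathcal{A}g(\pi_0) > -\infty$ while forcing $\mathcal{A}g$ to be monotone on each of $(0,\pi_0)$ and $(\pi_0,1)$; hence the one-sided limits $L_0 := \lim_{\pi\downarrow 0}\mathcal{A}g(\pi)$ and $L_1 := \lim_{\pi\uparrow 1}\mathcal{A}g(\pi)$ exist in $[\mathcal{A}g(\pi_0), 0]$. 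The task reduces to ruling out $L_0 < 0$ and $L_1 < 0$.

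For $L_0$ I would argue by contradiction: if $L_0 < 0$, the monotonicity in (G2) upgrades the limit to the pointwise bound $g''(\pi) \leq 2L_0/(\pi^2(1-\pi)^2)$ throughout $(0,\pi_0)$. The natural auxiliary is
\[ h(\pi) := g(\pi) - \pi g'(\pi), \]
which satisfies $h(\pi) \geq g(0) = 0$ by the concavity (tangent-line) inequality at $\pi$ sent to $x \to 0^+$, and has derivative $h'(\pi) = -\pi g''(\pi)$. Inserting the upper bound on $g''$ and restricting to $\pi < \min(\pi_0, 1/2)$ produces $h'(\pi) \geq 2|L_0|/\pi$. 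Integrating from $\pi$ up to a fixed $\pi_1$ in this range yields
\[ h(\pi) \leq h(\pi_1) - 2|L_0|\log(\pi_1/\pi) \to -\infty \quad \text{as } \pi \downarrow 0, \]
contradicting $h \geq 0$; thus $L_0 = 0$.

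The right endpoint is treated symmetrically with the mirror function $\tilde h(\pi) := g(\pi) + (1-\pi) g'(\pi)$, which is nonnegative by the tangent-line inequality at $\pi$ sent to $x \to 1^-$, and has derivative $\tilde h'(\pi) = (1-\pi) g''(\pi)$. Under the hypothesis $L_1 < 0$, the same chain of substitutions restricted to $\pi > \max(\pi_0, 1/2)$ gives $\tilde h'(\pi) \leq 2L_1/(1-\pi)$; integrating over $(\pi_1, \pi)$ then drives $\tilde h(\pi) \to -\infty$ as $\pi \uparrow 1$, again contradicting nonnegativity.

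No step looks like a real obstacle: the content of the lemma is that a strictly negative limit for $\mathcal{A}g$ at an endpoint would demand more concavity than is compatible with the boundary value $g=0$. The one delicate point is separating the roles of the factors $\pi^2$ and $(1-\pi)^2$ in $\mathcal{A}g$, which is handled cosmetically by restricting $\pi$ to the appropriate side of $1/2$ before estimating.
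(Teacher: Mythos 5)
Your argument is correct and uses the same overall strategy as the paper: assume a non-zero one-sided limit $L$, upgrade it via the monotonicity in (G2) to the pointwise bound $g''(\pi) \le 2L/(\pi^2(1-\pi)^2)$ on the corresponding side of $\pi_0$, and integrate to contradict $g(0)=g(1)=0$. The difference is how the integration is carried out. The paper integrates $g''$ twice directly from a reference point $\pi_0$, which produces the explicit antiderivative $(1-2\pi)\log\!\big(\pi/(1-\pi)\big)$ (essentially $\Psi$) and reads off the divergence. You instead introduce the tangent-line functionals $h(\pi) = g(\pi) - \pi g'(\pi)$ and $\tilde h(\pi) = g(\pi) + (1-\pi)g'(\pi)$, which by concavity are bounded below by $g(0)$ and $g(1)$ respectively, and satisfy $h' = -\pi g''$, $\tilde h' = (1-\pi) g''$. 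This collapses the argument to a single integration of a logarithmic derivative, so no explicit computation of the double integral is needed. It is a genuinely cleaner implementation of the same estimate. One small remark: the restriction to $\pi < 1/2$ (resp.\ $\pi > 1/2$) is cosmetic and not actually needed to pass from $2|L_0|/\big(\pi(1-\pi)^2\big)$ to $2|L_0|/\pi$, since $(1-\pi)^2 \le 1$ on all of $(0,1)$; the only restriction doing real work is $\pi < \pi_0$ (resp.\ $\pi > \pi_0$), which is what invokes (G2) to get the pointwise bound on $g''$ from the limit.
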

	
	\begin{proof}
		To see this observe by the concavity of $g$ that $\mathcal{A}g\leq 0$. With this we will argue by contraction. We will treat the limit $\pi\uparrow 1$ as the other is similar. Suppose that $\lim_{\pi\uparrow 1}\mathcal{A}g(\pi)\leq -\epsilon$ for some $\epsilon>0$ (the limit exists by monotonicity). By Assumption \ref{ass:g.scst} (G2) we must have $\mathcal{A}g\leq -\epsilon$ all $\pi\in(\pi_0,1)$. Then,
		\[-\frac{2\epsilon}{\pi^2(1-\pi)^2}\geq g''(\pi), \ \ \ \forall\pi\in(\pi_0,1).\]
		Using $\pi_0$ as reference point, we see that for $\pi>\pi_0$ we have:
		\begin{align*}
			g(\pi)=g(\pi_0)+\int_{\pi_0}^\pi g'(\pi_0)+\int_{\pi_0}^ug''(v)dvdu&\leq g(\pi_0)+\int_{\pi_0}^\pi g'(\pi_0)+2\epsilon\int_{\pi_0}^u\frac{-1}{v^2(1-v)^2}dvdu\\
			&=g(\pi_0)+g'(\pi_0)\left( \pi-\pi_0 \right)+2\left( 1-2\pi \right) \epsilon\log \left( {\frac {\pi}{1-\pi}}
			\right)+\epsilon C_1\pi+\epsilon C_0
		\end{align*}
		where $C_1$ and $C_0$ are constants depending only on $\pi_0$. Taking $\pi\uparrow 1$ across the above we obtain the contradiction $0=g(1)\leq -\infty$.
	\end{proof}
	
	We now turn to an investigation of the function $H$. This will help us understand if a solution $(A^*,B^*)$ to \eqref{eqn:H.tangent.secant} and \eqref{eqn:Hprime.equality} exists. The following lemma tells us that $\pi_*$ and $\pi^*$ are critical points for $H'$ and allows us to characterize the function behavior.
	
	\begin{lemma}\label{lem:Hprime}
		$H'(\pi)$ is strictly increasing on $(0,\pi_*)$, strictly decreasing on $(\pi_*,\pi^*)$ and strictly increasing on $(\pi^*,\infty)$. Moreover, \[\lim_{\pi\downarrow0} H'(\pi)=-\infty, \ \ \mathrm{and} \ \ \lim_{\pi\uparrow 1}H'(\pi)=\infty.\]
	\end{lemma}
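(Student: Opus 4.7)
The plan is to read off the monotonicity of $H'$ from the sign of $H''$, which admits a clean formula in terms of $\mathcal{A}g$, and then to obtain the endpoint limits by comparing $g'$ against $\Psi'$. A direct calculation from the definition of $\Psi$ gives
\[\Psi'(\pi)=-2\log\frac{\pi}{1-\pi}+\frac{1}{\pi}-\frac{1}{1-\pi},\qquad \Psi''(\pi)=-\left(\frac{1}{\pi}+\frac{1}{1-\pi}\right)^{\!2}=-\frac{1}{\pi^2(1-\pi)^2}.\]
Substituting into $H=g-2K^{-1}\Psi$ and using $g''(\pi)=2\,\mathcal{A}g(\pi)/[\pi^2(1-\pi)^2]$ yields
\[H''(\pi)=\frac{2\bigl(\mathcal{A}g(\pi)+K^{-1}\bigr)}{\pi^2(1-\pi)^2},\]
so the sign of $H''(\pi)$ coincides with the sign of $\mathcal{A}g(\pi)+K^{-1}$. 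By the definition of $U$ in \eqref{eqn:set.U}, this gives $H''<0$ on $U=(\pi_*,\pi^*)$ and $H''>0$ on $(0,\pi_*)\cup(\pi^*,1)$, which immediately establishes the three intervals of strict monotonicity of $H'$ claimed in the lemma.

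For the endpoint limits, I would exploit the explicit form of $\Psi'$: its polynomial part dominates the logarithmic one, so $\Psi'(\pi)\sim 1/\pi\to+\infty$ as $\pi\downarrow 0$ and $\Psi'(\pi)\sim -1/(1-\pi)\to -\infty$ as $\pi\uparrow 1$. Since $H'=g'-2K^{-1}\Psi'$, it then remains to show that $g'$ is of strictly smaller order than $1/\pi$ at $0$ and than $1/(1-\pi)$ at $1$. This is where Lemma \ref{lem:Ag.limits} enters: because $\mathcal{A}g\le 0$ and $\mathcal{A}g(\pi)\to 0$ at the endpoints, for any $\epsilon>0$ one has $-g''(\pi)\le 2\epsilon/[\pi^2(1-\pi)^2]$ for $\pi$ sufficiently close to $0$. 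Integrating this bound from a fixed reference point, together with the concavity of $g$, yields $|g'(\pi)|\le 2\epsilon/\pi+O(\log(1/\pi))$ near $0$, hence $g'(\pi)=o(1/\pi)$; an analogous argument at the other endpoint gives $g'(\pi)=o(1/(1-\pi))$. Combining these estimates with the asymptotics of $\Psi'$ produces the claimed infinite limits.

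The main obstacle is precisely this control of $g'$ at the endpoints. Since $g\in C^2(0,1)$ is only assumed in the interior, $g'$ may genuinely blow up there (as in Example \ref{ex:g-cross-entropy.scst}), so the naive concavity bound $g''\le 0$ is not strong enough to dominate $\Psi'$. The argument must use the \emph{quantitative} vanishing of $\mathcal{A}g$ at the boundary provided by Lemma \ref{lem:Ag.limits} to furnish an arbitrarily small constant that makes $g'$ subordinate to $\Psi'$.
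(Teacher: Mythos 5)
Your proof is correct and follows the paper's strategy for the monotonicity part essentially verbatim: both you and the paper compute that the sign of $H''$ equals the sign of $\mathcal{A}g+K^{-1}$, and both read off strict convexity/concavity from the definition of $U$ together with the strict monotonicity of $\mathcal{A}g$ in (G2). The formula $H''(\pi)=2(\mathcal{A}g(\pi)+K^{-1})/[\pi^2(1-\pi)^2]$ is a compact way to say the same thing the paper says via $g''<2K^{-1}\Psi''$.

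For the endpoint limits the conceptual ingredient is identical — Lemma~\ref{lem:Ag.limits} supplies the quantitative vanishing of $\mathcal{A}g$ that lets $\Psi'$ dominate $g'$ — but the technical execution differs. The paper factors $H'=2K^{-1}\Psi'\bigl[g'/(2K^{-1}\Psi')-1\bigr]$, splits into the cases $g'$ bounded / unbounded, and in the unbounded case invokes the one-sided L'H\^opital inequality $\limsup g'/\Psi'\le\limsup g''/\Psi''=\limsup(-\mathcal{A}g/K^{-1})$ to show the ratio stays below $1-\delta$. You instead integrate the bound $-g''\le 2\epsilon/[\pi^2(1-\pi)^2]$ directly to obtain $g'(\pi)\le 2\epsilon/\pi+O(\log(1/\pi))$, then take $\epsilon<K^{-1}$ so that $g'-2K^{-1}\Psi'\le 2(\epsilon-K^{-1})/\pi+O(\log(1/\pi))\to-\infty$. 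Your route is slightly more self-contained — it avoids the L'H\^opital machinery and does not require the case split on whether $g'$ blows up — and in fact proves the marginally stronger statement $g'=o(1/\pi)$ rather than merely that the ratio is bounded away from one. The observation at the end of your write-up, that concavity alone ($g''\le 0$) is insufficient because $g'$ can be unbounded (as for cross-entropy), correctly identifies why Lemma~\ref{lem:Ag.limits} is indispensable.
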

	
	\begin{proof}
		On $(\pi_*,\pi^*)$ we have that $\mathcal{A}g<-K^{-1}=2K^{-1}\mathcal{A}\Psi$ which implies that $g''<2K^{-1}\Psi''$. Then
		\[H''(\pi)=g''(\pi)-2K^{-1}\Psi''<0\]
		and so $H'(\pi)$ is strictly decreasing on this interval. On the other hand, on $(\pi_*,\pi^*)^C$ we have $\mathcal{A}g\geq-K^{-1}=2K^{-1}\mathcal{A}\Psi$ which implies that $g''\geq 2K^{-1}\Psi''$. So,
		\[H''(\pi)=g''(\pi)-2K^{-1}\Psi''\geq 0\]
		and $H'(\pi)$ is increasing. Assumption (G2) allows us to argue that the increase is strict. Lastly, we show $H'(0+)=-\infty$ and $H'(1-)=\infty$. We have
		\[H'(\pi)=g'(\pi)-2K^{-1}\Psi'(\pi)=2K^{-1}\Psi'(\pi)\left[\frac{g'(\pi)}{2K^{-1}\Psi'(\pi)}-1\right].\]
		It is easy to see that $\lim_{\pi\downarrow0}2K^{-1}\Psi'(\pi)=\infty$ and $\lim_{\pi\uparrow 1}2K^{-1}\Psi'(\pi)=-\infty$. Thus, to get our desired result, it suffices to show that 
		\[0\leq \liminf_{\pi\downarrow0}\frac{g'(\pi)}{2K^{-1}\Psi'(\pi)}\leq \limsup_{\pi\downarrow0}\frac{g'(\pi)}{2K^{-1}\Psi'(\pi)}\leq 1-\delta\]
		for some $\delta>0$. We treat the case of small $\pi$, as $\pi$ near $1$ is similar. Recall that $g$ is concave with $g(0)=g(1)=0$, so $g'$ is decreasing and $g'(\pi)\geq0$ for sufficiently small $\pi$. At the same time, $\Psi'(\pi)>0$ for any $\pi<1/2$. Hence the inequality
		$0\leq \frac{g'(\pi)}{2K^{-1}\Psi'(\pi)}$ holds for all sufficiently small $\pi$. Suppose $\lim_{\pi\downarrow0}g'(\pi)<\infty$ (the limit exists by monotonicity). Then, the remaining bound is clear since the denominator diverges to $\infty$. Otherwise $\lim_{\pi\downarrow0}g'(\pi)=\infty$ and we must carefully analyze the limit. By the general form of L'H\^opital's rule:
		\[\limsup_{\pi\downarrow0}\frac{g''(\pi)}{2K^{-1}\Psi''(\pi)}\geq \limsup_{\pi\downarrow0}\frac{g'(\pi)}{2K^{-1}\Psi'(\pi)}\geq\liminf_{\pi\downarrow0}\frac{g'(\pi)}{2K^{-1}\Psi'(\pi)}\geq0\]
		where the last inequality follows from the lower bound above. Observe that for any $0<\pi<1$
		\begin{align*}
			\frac{g''(\pi)}{2K^{-1}\Psi''(\pi)}=\frac{\frac{1}{2}\pi^2(1-\pi)^2g''(\pi)}{\frac{1}{2}\pi^2(1-\pi)^22K^{-1}\Psi''(\pi)}=\frac{\mathcal{A}g}{2K^{-1}\mathcal{A}\Psi}=-\frac{\mathcal{A}g}{K^{-1}}.
		\end{align*}
		Now the limit $\lim_{\pi\downarrow0}\mathcal{A}g$ exists and is equal to $0$ by Lemma \ref{lem:Ag.limits}. At the same time on $(0,\pi_*)$, $\mathcal{A}g\geq -K^{-1}$. 
		In particular, for all sufficiently small $\pi$ there is an $\epsilon>0$ such that $-\mathcal{A}g\leq K^{-1}-\epsilon$. Plugging this bound into the above we obtain:
		\[\frac{g''(\pi)}{2K^{-1}\Psi''(\pi)}\leq \frac{K^{-1}-\epsilon}{K^{-1}}\]
		for all sufficiently small $\pi$. For $\delta=\frac{\epsilon}{K^{-1}}$ we obtain the desired result and conclude $H'(0+)=-\infty$.
	\end{proof}
	
	We also note for future reference that Lemma \ref{lem:Hprime} immediately tells us about the regions of concavity of $H$. This observation will be critical to a subsequent part of the proof. 
	
	\begin{corollary}\label{cor:convexity.concavity.H}
		$H$ is strictly convex on $(0,\pi_*)$, strictly concave on $(\pi_*,\pi^*)$, and strictly convex on $(\pi^*,1)$.
	\end{corollary}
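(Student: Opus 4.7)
The plan is to invoke Lemma \ref{lem:Hprime} directly; the corollary is essentially a one-line restatement of that lemma in terms of convexity rather than derivative monotonicity. First I would note that $H = g - 2K^{-1}\Psi$ is $C^2(0,1)$, since $g \in C^2(0,1)$ by Assumption \ref{ass:g.scst}(G1) and $\Psi$ is smooth on $(0,1)$. This regularity allows me to freely move between the first-derivative and second-derivative characterizations of convexity.

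Next I would apply the standard characterization that a $C^1$ function on an open interval is strictly convex (resp.\ strictly concave) if and only if its derivative is strictly increasing (resp.\ strictly decreasing) there. Lemma \ref{lem:Hprime} furnishes precisely these three pieces of information about $H'$: strictly increasing on $(0,\pi_*)$, strictly decreasing on $(\pi_*,\pi^*)$, and strictly increasing on $(\pi^*,1)$. Translating each in turn gives strict convexity of $H$ on $(0,\pi_*)$, strict concavity on $(\pi_*,\pi^*)$, and strict convexity on $(\pi^*,1)$, which is exactly the content of the corollary.

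There is no real obstacle here, so the proof is short. If one prefers a second-derivative viewpoint, one can equivalently recall from the proof of Lemma \ref{lem:Hprime} that $H''(\pi) = g''(\pi) - 2K^{-1}\Psi''(\pi)$ has the sign of $K^{-1} + \mathcal{A}g(\pi)$ (up to the positive factor $\tfrac{1}{2}\pi^2(1-\pi)^2$), and that by definition of $\pi_*$ and $\pi^*$ as the boundary points of $U$ in \eqref{eqn:set.U}, this sign is strictly positive on $(0,\pi_*)\cup(\pi^*,1)$ and strictly negative on $(\pi_*,\pi^*)$, which again yields the three claims. Either route makes the corollary a direct consequence of work already done.
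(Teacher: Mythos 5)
Your proof is correct and matches the paper's approach: the paper presents Corollary~\ref{cor:convexity.concavity.H} as an immediate consequence of Lemma~\ref{lem:Hprime}, and you simply spell out the standard equivalence between strict monotonicity of $H'$ and strict convexity/concavity of $H$ (with the alternative $H''$-sign argument as a sanity check). No gap.
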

	
	The main idea for finding a solution to \eqref{eqn:H.tangent.secant}-\eqref{eqn:Hprime.equality} is as follows. Given the critical behavior in Lemma \ref{lem:Hprime}, we observe that the unique local maximum and minimum of $H'$ are attained at $\pi_*$ and $\pi^*$, respectively. Using the (local) strict monotonicity of $H$ with the limits $\lim_{\pi\downarrow0} H'(\pi)=-\infty$ and $\lim_{\pi\uparrow 1}H'(\pi)=\infty$ we have that there are two \textit{unique} points $\underline{\pi}\in (0,\pi_*)$ and  $\overline{\pi}\in(\pi^*,1)$ such that $H(\underline{\pi}) = H(\pi^*)$ and $H(\overline{\pi}) = H(\pi_*)$. Using the definition of these 4 critical points we will argue that there is a unique solution $(A^*,B^*)$ to \eqref{eqn:H.tangent.secant}-\eqref{eqn:Hprime.equality} with $A^*\in[\underline{\pi},\pi_*]$ and $B^*\in[\overline{\pi},\pi^*]$. An illustration of these points is given in Figure \ref{fig:pf.illustration}. The crux of this argument will rest on a geometric characterization of the equations \eqref{eqn:H.tangent.secant}-\eqref{eqn:Hprime.equality} which we describe here.
	
	Equation \eqref{eqn:H.tangent.secant} says we need points $(A,B)$ so that the secant connecting $H(A)$ to $H(B)$ has the same slope as the tangent line at $A$. Then, \eqref{eqn:Hprime.equality} says that this slope must also be the slope of the tangent line at $B$. We readily conclude that a pair of points $A\in[\underline{\pi},\pi_*]$ and $B\in[\pi^*,\overline{\pi}]$ solve \eqref{eqn:H.tangent.secant} and \eqref{eqn:Hprime.equality} if and only if they admit a common tangent line for $H$. 
	
	
	Recalling Corollary \ref{cor:convexity.concavity.H} we have that $H$ is strictly convex on $[\underline{\pi},\pi_*]$ and $[\pi^*,\overline{\pi}]$. Hence, to obtain the existence and uniqueness of the points $A^*\in [\underline{\pi},\pi_*]$ and $B^*\in[\pi^*,\overline{\pi}]$ satisfying \eqref{eqn:H.tangent.secant} and \eqref{eqn:Hprime.equality} it suffices to establish the existence and uniqueness of a common tangent line to two strictly convex functions defined on \textit{disjoint} and \textit{strictly separated} domains. To be explicit, define here the functions
	\begin{equation}\label{eqn:f0.f1}
		f_{0}:=H|_{\pi\in[\underline{\pi},\pi_*]}, \ \mathrm{and} \ f_{1}:=H|_{\pi\in[\pi^*,\overline{\pi}]}
	\end{equation}
	formed by restricting $H$ to these sub-domains. For this purpose, we prove the following general lemma using a fundamental geometric result for convex sets known as Bisztriczky's Theorem.

	\begin{figure}[h]
		\begin{center}
			\includegraphics[width=0.35\textwidth]{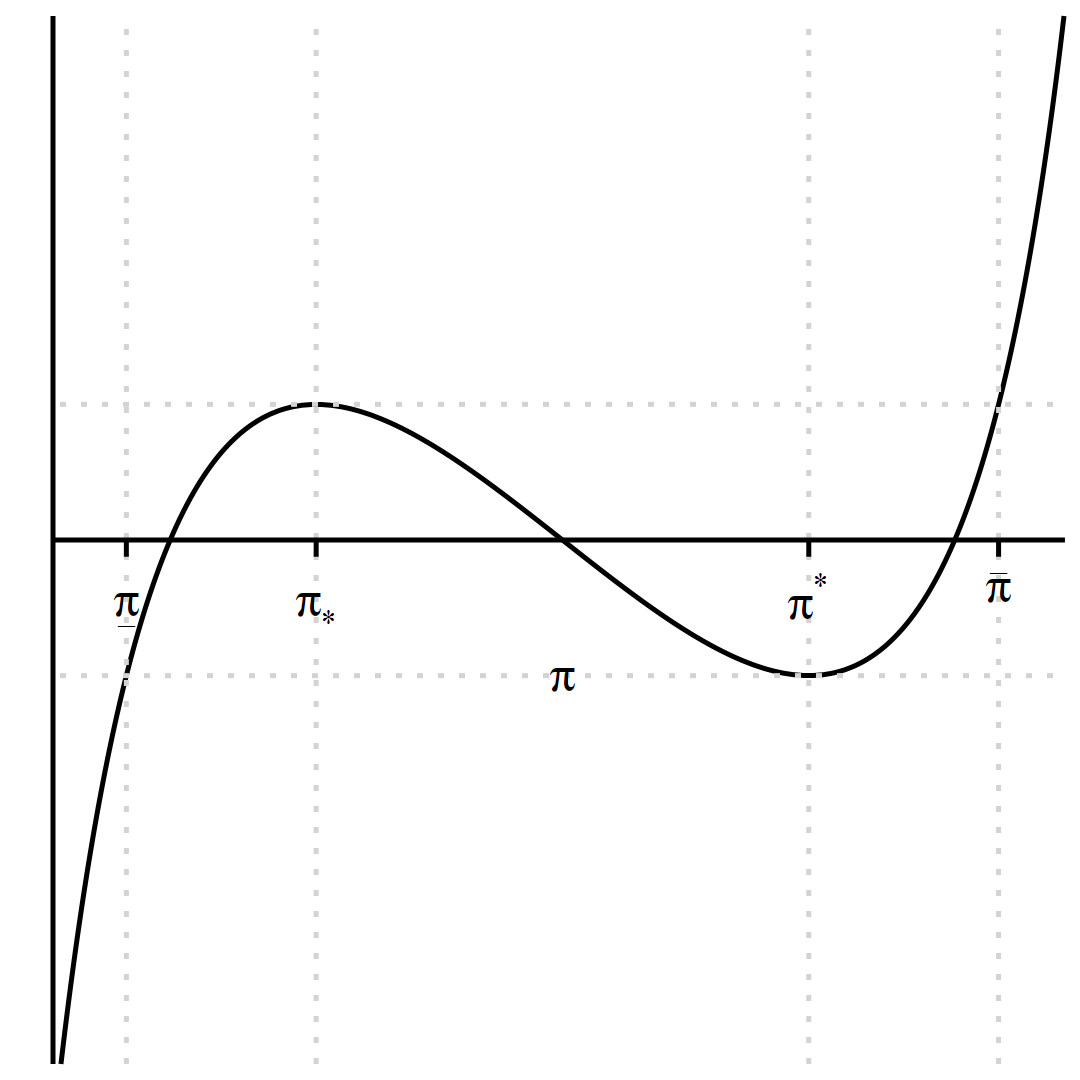}
			\hspace{1cm}
			\includegraphics[width=0.35\textwidth]{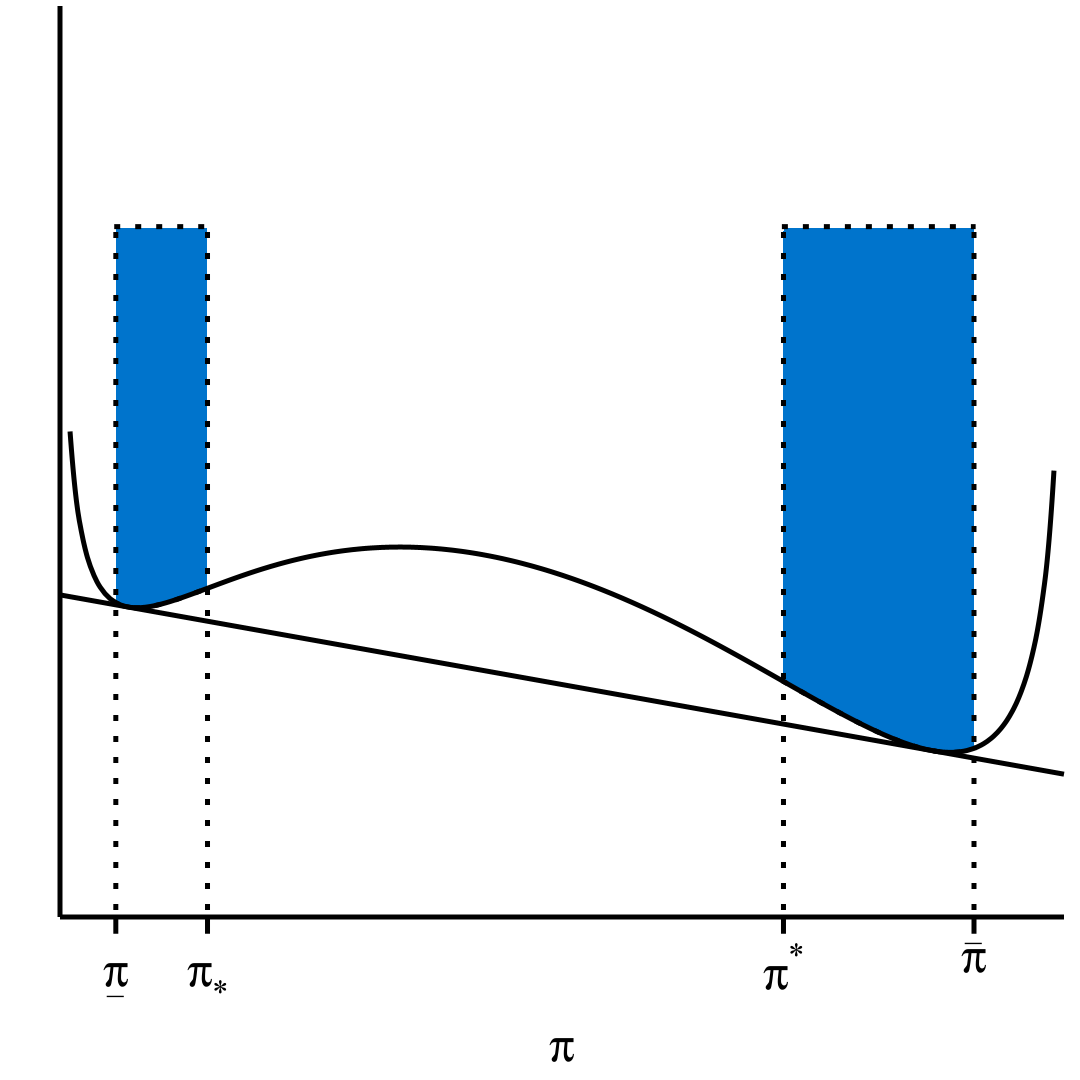}
		\end{center}
		\caption{Visualization of the critical points of $H'$ for the cross-entropy loss (left). Illustration of the arguments of Lemma \ref{lem:common.tangents.convex.func} using an example of $H$ which defines $f_0$, $f_1$ (right). Visualized below the graph of $H$ is the unique common tangent.}
		\label{fig:pf.illustration}
	\end{figure}
	
	\begin{lemma}\label{lem:common.tangents.convex.func}
		Let $X:=[x_0,x_1],Y:=[y_0,y_1]\subset\mathbb{R}$ be strictly separated closed intervals (i.e. $\mathrm{dist}(X, Y)>0$). If $f_X:X\to\mathbb{R}$ and $f_Y:Y\to \mathbb{R}$ are convex and continuous on their domains, then $f_X$ and $f_Y$ have exactly one common tangent line. 
	\end{lemma}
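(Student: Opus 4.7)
The plan is to reduce existence and uniqueness of a common tangent to existence and uniqueness of a zero of a strictly monotone continuous function of the slope. Assume without loss of generality that $x_1 < y_0$, and set $d := y_0 - x_1 > 0$. For each slope $m \in \mathbb{R}$, define
\[
	c_X(m) := \min_{x \in X}(f_X(x) - mx), \qquad c_Y(m) := \min_{y \in Y}(f_Y(y) - my).
\]
By compactness both minima are attained, and the resulting functions are continuous and concave in $m$ as pointwise infima of affine functions. A line $y = mx + c$ touches $f_X$ while lying pointwise below it on $X$ if and only if $c = c_X(m)$, and similarly for $f_Y$. Hence common tangent lines to $f_X$ and $f_Y$ correspond bijectively to slopes $m$ with $c_X(m) = c_Y(m)$, in which case the line is $y = mx + c_X(m)$.

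The core step is to analyze $\phi(m) := c_X(m) - c_Y(m)$. Plugging a minimizer at slope $m_2$ into the definition of $c_X(m_1)$ yields the envelope inequality $c_X(m_2) - c_X(m_1) \geq -x_1(m_2 - m_1)$ for $m_2 > m_1$; the analogous argument for $f_Y$ (using a minimizer at slope $m_1$) yields $c_Y(m_2) - c_Y(m_1) \leq -y_0(m_2 - m_1)$. Subtracting gives $\phi(m_2) - \phi(m_1) \geq (y_0 - x_1)(m_2 - m_1) = d(m_2 - m_1) > 0$, so $\phi$ is strictly increasing on $\mathbb{R}$. For $|m|$ large enough the minimizers are pinned at the relevant endpoints, so $\phi$ becomes eventually affine and a direct computation gives $\phi(m) \to -\infty$ as $m \to -\infty$ and $\phi(m) \to +\infty$ as $m \to +\infty$. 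By the intermediate value theorem there is a unique $m^* \in \mathbb{R}$ with $\phi(m^*) = 0$, and $y = m^* x + c_X(m^*)$ is the unique common tangent.

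This plan is the analytic counterpart of the geometric picture furnished by Bisztriczky's theorem applied to the convex hulls $K_X := \mathrm{conv}\{(x, f_X(x)) : x \in X\}$ and $K_Y := \mathrm{conv}\{(y, f_Y(y)) : y \in Y\}$, which are strictly separated compact convex sets in $\mathbb{R}^2$: the two common supporting lines it produces correspond to the tangent \emph{from below} (touching the graphs, which by convexity of $f_X, f_Y$ form the lower boundaries of $K_X, K_Y$) and the tangent from above (touching only the chords of $K_X, K_Y$ and hence not a tangent to $f_X, f_Y$ as functions). The main subtlety I anticipate is that $c_X$ and $c_Y$ are only concave and not a priori differentiable, which precludes a derivative-based monotonicity argument; the slope-comparison inequality above sidesteps this cleanly by being valid for every pair $m_1 < m_2$, and it is also what guarantees that the line $y = m^* x + c_X(m^*)$ is a genuine tangent to each $f_X, f_Y$ (rather than merely supporting one of them from a distance).
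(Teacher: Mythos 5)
Your proof is correct, and it takes a genuinely different route from the paper's. The paper inflates the graphs of $f_X$ and $f_Y$ into two strictly separated compact convex bodies in $\mathbb{R}^2$, invokes Bisztriczky's theorem to obtain exactly two common tangent lines keeping the bodies on the same side, and then rules out the artificial upper ``lid'' and the lines through the added corner vertices, leaving exactly one genuine common tangent to the graphs. You instead work analytically with the support-line intercepts
$c_X(m)=\min_{x\in X}\bigl(f_X(x)-mx\bigr)$ and $c_Y(m)=\min_{y\in Y}\bigl(f_Y(y)-my\bigr)$,
observe that common tangents correspond bijectively to zeros of $\phi:=c_X-c_Y$, and prove that $\phi$ is continuous, strictly increasing with slope at least $d:=\mathrm{dist}(X,Y)>0$, and surjective onto $\mathbb{R}$. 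The monotonicity estimate
$\phi(m_2)-\phi(m_1)\ge d\,(m_2-m_1)$
is the heart of the argument and does all the work. Your route is self-contained and elementary (essentially a one-dimensional Legendre-duality argument), whereas the paper's is shorter once one accepts the cited geometric theorem and is chosen deliberately to make the connection to Bisztriczky's result explicit, which the authors highlight in the introduction.

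One small imprecision in your write-up: the claim that ``for $|m|$ large the minimizers are pinned at the endpoints, so $\phi$ becomes eventually affine'' need not hold in the lemma's generality. If, say, the left derivative of $f_X$ at $x_1$ is $+\infty$ (e.g.\ $f_X(x)=-\sqrt{x_1-x}$), then the minimizer of $f_X(x)-mx$ stays strictly interior for every finite $m$. This does not affect your conclusion, because the divergence $\phi(m)\to\pm\infty$ already follows from the growth bound: fixing $m_1=0$ and sending $m_2\to+\infty$ (resp.\ $m_1\to-\infty$ with $m_2=0$) yields $\phi(m)\ge\phi(0)+dm$ for $m>0$ and $\phi(m)\le\phi(0)+dm$ for $m<0$. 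I would drop the ``pinned at endpoints'' sentence and cite the growth bound directly for the limits.
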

	
	\begin{proof}
		Define $f^*:=\max\{f_X(x_0),f_X(x_1),f_Y(y_0),f_Y(y_1)\}+1$ (Note: $f_X$ and $f_Y$ are bounded by continuity on the compact sets $X$ and $Y$).
		Consider the convex sets:
		$\mathscr{F}_X:=\mathrm{conv}\left\{\mathrm{Gr}(f_X), \{(x_0,f^*)\}, \{(x_1,f^*)\}\right\}$,
		and
		$\mathscr{F}_Y:=\mathrm{conv}\left\{\mathrm{Gr}(f_Y), \{(y_0,f^*)\}, \{(y_1,f^*)\}\right\}$,
		where $\mathrm{Gr}(f_\cdot)$ denotes the graph of $f_\cdot$ and $\mathrm{conv}\{\cdot\}$ denotes the convex hull.
		These sets correspond to the intersection of the epigraph of $f_X$ (resp. $f_Y$) with the half-infinite strip in $X$ (resp. $Y$) ending at the horizontal line $y=f^*$ (see Figure \ref{fig:pf.illustration} for an illustration).
		
		It is clear that $\mathscr{F}_X$ and $\mathscr{F}_Y$ are convex and compact sets with nonempty interior that are strictly separated (if e.g. $f_X$  is constant, the set $\mathscr{F}_X$ is just a rectangle with a vertical height of $1$). Then, the $2$-dimensional case of Bisztriczky's Theorem (c.f. \cite{lewis1996common} or \cite[Corollary 3.12]{castillo2021common}) says that there are exactly two common tangent lines to $\mathscr{F}_X$ and $\mathscr{F}_Y$ with both sets on the same side of the tangent line. It is clear from construction that one such tangent is the line $y=f^*$. We claim the other line must touch the graph of $f_X$ and $f_Y$. This is clear since the vertical lines passing through $(x_0,f^*)$, $(x_1,f^*)$, $(y_0,f^*)$, and $(y_1,f^*)$ are tangent to the left and rightmost faces of $\mathscr{F}_X$ or $\mathscr{F}_Y$ and touch (by construction) only one of the sets. It is also easy to see that the remaining family of tangent lines touching one of the vertices $(x_0,f^*)$, $(x_1,f^*)$, $(y_0,f^*)$, and $(y_1,f^*)$ also cannot (again by construction since the upper faces are level) both be common and keep the sets on the same side. Hence, the only feasible points that the remaining tangent can touch belong to the graphs of $f_X$ and $f_Y$. This completes the proof.
	\end{proof}
	
	With this, we collect our results on $H$ to arrive at the necessary conclusion for the boundaries.
	
	\begin{proposition}\label{prop:Astar.Bstar}
		There exists a unique solution pair $(A^*,B^*)\in[\underline{\pi},\pi_*]\times [\pi^*,\overline{\pi}]$ solving \eqref{eqn:H.tangent.secant} and \eqref{eqn:Hprime.equality}.
	\end{proposition}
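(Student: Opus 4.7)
The plan is to apply Lemma \ref{lem:common.tangents.convex.func} to $f_0$ and $f_1$ and translate the resulting common tangent back into a pair $(A^*,B^*)$ that solves \eqref{eqn:H.tangent.secant}--\eqref{eqn:Hprime.equality}.

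Checking the hypotheses is routine: Corollary \ref{cor:convexity.concavity.H} gives strict convexity of $H$ on $(0,\pi_*)$ and on $(\pi^*,1)$, so $f_0$ and $f_1$ are strictly convex on the closed sub-intervals $[\underline{\pi},\pi_*]$ and $[\pi^*,\overline{\pi}]$; continuity is immediate from $H\in C^2(0,1)$; and strict separation holds because $\pi_*<\pi^*$ (since $U$ is a non-empty open interval). Invoking Lemma \ref{lem:common.tangents.convex.func} then produces a unique common tangent line $L$, and strict convexity forces $L$ to meet each graph at a single point $A^*\in[\underline{\pi},\pi_*]$ and $B^*\in[\pi^*,\overline{\pi}]$.

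Next I would read off the algebraic equations. Since $L$ passes through $(A^*,H(A^*))$ and $(B^*,H(B^*))$, its slope equals the secant $(H(B^*)-H(A^*))/(B^*-A^*)$; at the same time, $L$ agrees with the tangent to $H$ at $A^*$ and $B^*$, so its slope is also $H'(A^*)=H'(B^*)$. Equating the three expressions yields \eqref{eqn:H.tangent.secant} and \eqref{eqn:Hprime.equality} simultaneously. For uniqueness, any solution $(A,B)\in[\underline{\pi},\pi_*]\times[\pi^*,\overline{\pi}]$ of the system defines, via the same geometric reading, a common tangent to $f_0$ and $f_1$, and must therefore coincide with $L$ by the uniqueness clause of Lemma \ref{lem:common.tangents.convex.func}, giving $(A,B)=(A^*,B^*)$.

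The main technical delicacy I anticipate is the case in which $A^*$ or $B^*$ coincides with one of the four endpoints $\underline{\pi},\pi_*,\pi^*,\overline{\pi}$. At such an endpoint the auxiliary convex set constructed in the proof of Lemma \ref{lem:common.tangents.convex.func} has a corner, so $L$ is only a support line in the one-sided sense: a priori its slope need only satisfy an inequality against $H'$ at the endpoint rather than equality, which is not enough for \eqref{eqn:Hprime.equality}. Closing this gap will require invoking the defining identities $H(\underline{\pi})=H(\pi^*)$ and $H(\overline{\pi})=H(\pi_*)$ together with the monotonicity of $H'$ from Lemma \ref{lem:Hprime} to check that any such boundary configuration either violates the common-tangent property or collapses to the equality case in which the support slope agrees with $H'$ at the corner, so the algebraic equations are recovered in either scenario.
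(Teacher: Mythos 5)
Your proposal follows the paper's proof essentially step for step: both translate the system \eqref{eqn:H.tangent.secant}--\eqref{eqn:Hprime.equality} into the existence of a common tangent to the convex restrictions $f_0,f_1$ and then invoke Lemma~\ref{lem:common.tangents.convex.func}. If anything, you are slightly more careful than the paper, which dismisses the endpoint/tangency-point-interiority issue as ``straightforward to check'' while you explicitly flag it and sketch the monotonicity argument needed to rule out corner configurations.
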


	\begin{proof}
		
		The behavior of $H'(\pi)$ given in Lemma \ref{lem:Hprime} tells us that there exists a family of pairs $(A,B)$ such that $H'(A)=H'(B)$. Indeed, we see from the intermediate value theorem that every $\pi\in [\underline{\pi},\pi_*]$ has a unique counterpart $\hat{\pi}\in[\pi^*,\overline{\pi}]$ such that $H'(\pi)=H'(\hat{\pi})$. Using this family of feasible solutions to \eqref{eqn:Hprime.equality}, we now turn to the remaining equation \eqref{eqn:H.tangent.secant}. To determine that there is a unique feasible solution to \eqref{eqn:H.tangent.secant}-\eqref{eqn:Hprime.equality} we need to ensure that \eqref{eqn:H.tangent.secant} is uniquely satisfied by some $A\in[\underline{\pi},\pi_*]$ and $B\in[\pi^*,\overline{\pi}]$ solving \eqref{eqn:Hprime.equality}. By the geometric characterization preceding Lemma \ref{lem:common.tangents.convex.func} we see that it is both necessary and sufficient to show that a unique common tangent exists for the functions $f_0$ and $f_1$ of \eqref{eqn:f0.f1}. This follows immediately from an application of Lemma \ref{lem:common.tangents.convex.func}. Moreover, it is straightforward to check that the tangency points $A^*$ and $B^*$ must be interior to the domains $[\underline{\pi},\pi_*]$ and $[\pi^*,\overline{\pi}]$. This completes the proof.
	\end{proof}

	\begin{remark}[Symmetric Penalty Case]\label{rem:symmetric}
		If $g$ is symmetric about $1/2$ then we want a symmetric solution to \eqref{eqn:fb1}-\eqref{eqn:fb7} and the complexity is substantially reduced. To see this, we use symmetry to search for a solution where $B=1-A$. First, by symmetry, we must have $H'(1/2)=0$ and so by Lemma \ref{lem:Hprime} we obtain $H'(\pi_*)>0$ is a positive local maximum, $H'(\pi^*)<0$ is a negative local minimum and $H'(\pi)$ crosses the $x$-axis at $\pi_0=1/2\in[\pi_*,\pi^*]$. Since $H'(0+)=-\infty$, by the intermediate value theorem and monotonicity there must exist a unique point $\hat{\pi}\in (0,\pi_*)$ such that $H'(\hat{\pi})=0$. By the symmetry of $H$ we must also have $H'(1-\hat{\pi})=0$ and moreover, $1-\hat{\pi}$ is the unique point in $(\pi^*,1)$ with $H'(1-\hat{\pi})=0$. Hence, by the critical behavior outlined above, we have that $(\hat{\pi},1-\hat{\pi})$ are the only pairs of the form $(\pi,1-\pi)$ with $\pi\not=1/2$ satisfying $H'(\pi)=H'(1-\pi)$ as required by \eqref{eqn:Hprime.equality}. This suggests that we can take $A=\hat{\pi}$ and $B=1-\hat{\pi}$. Indeed, using $H'(\hat{\pi})=H'(1-\hat{\pi})=0$ with the symmetry of $H$, it is easy to see that \eqref{eqn:H.tangent.secant} is also satisfied. Thus, this choice of $A$ and $B$ satisfies both consistency equations. As will be shown below in the general case, it is then straightforward to show that the remaining equations for the free boundary problem hold for $V(\pi;A)$.
	\end{remark}

	We are now ready to combine all of these findings to prove our main theorem. 
%
%
%

	\begin{proof}[Proof of Theorem \ref{thm:free.bdy}]
		The first claim in the theorem statement is immediate. On the other hand, the proof of the second claim will proceed in stages.
		
		To obtain sufficiency, suppose $\mathcal{A}g(\pi_0)\geq -K^{-1}$. Then by Assumption \ref{ass:g.scst}(G2) $\mathcal{A}g(\pi)\geq -K^{-1}$ for all $\pi\in(0,1)$. For any fixed $A\in(0,1)$ and all $\pi\in(0,1)$, we obtain $\mathcal{A}g(\pi)\geq -K^{-1}= \mathcal{A}V(\pi;A)$. Now since $\pi^2(1-\pi)^2>0$ for $\pi\in(0,1)$, $g''(\pi)\geq V''(\pi;A)$. As a result, for $\pi>A$
		\begin{align*}
			V(\pi;A)=V(A;A)+\int_A^\pi V'(u;A)du&=g(A)+\int_A^\pi V'(A;A)+\int_A^uV''(v;A)dvdu\\
			&\leq g(A)+\int_A^\pi g'(A)+\int_A^ug''(v)dvdu=g(\pi).
		\end{align*}
		Under (G2) there must be a set of positive measure on which $V''(\cdot;A)< g''$, so the inequality above is strict and there is no possible second intersection point $B$ where \eqref{eqn:fb3} can hold.
		
		Establishing necessity requires a careful analysis that leverages the understanding we developed for the function $H$. For any fixed $A$, we recall that the unique solution to \eqref{eqn:fb1}, \eqref{eqn:fb2}, and \eqref{eqn:fb4} is given by $V(\pi;A)$ in \eqref{eqn:V.pi.A}. By Proposition \eqref{prop:Astar.Bstar} we have that there exists a unique pair $(A^*,B^*)\in[\underline{\pi},\pi_*]\times [\pi^*,\overline{\pi}]$ solving \eqref{eqn:H.tangent.secant} and \eqref{eqn:Hprime.equality} which are equivalent to 
		\eqref{eqn:fb3} and \eqref{eqn:fb5}.
		
%
		
		We now show that it is necessary to have $(A,B)\in[\underline{\pi},\pi_*]\times [\pi^*,\overline{\pi}]$ for the free boundary equations to hold. By the critical behavior in Lemma \ref{lem:Hprime}, if either $A<\underline{\pi}$ or $B>\overline{\pi}$ it clear that there does not exist any pair $(A,B)$ of points where the values of $H'$ coincide as required by \eqref{eqn:Hprime.equality}. We further claim that feasible points $A<B$ for our problem solving \eqref{eqn:Hprime.equality} must be such that $A,B\not\in [\pi_*,\pi^*]$. Once again by Lemma \ref{lem:Hprime} it is clear that we cannot have $A=\pi^*$. On the other hand, if it held that $A\in [\pi_*,\pi^*)$, then
		$V'(\pi;A)-g'(\pi)=H'(A)-H'(\pi)$
		is positive on $(A,\pi^*]$. Consequently,
		$V(\pi;A)-g(\pi)=\int_{A}^\pi H'(A)-H'(u)du>0$
		for $\pi$ near $A$ which violates \eqref{eqn:fb6}. A similar argument holds if $B\in[\pi_*,\pi^*]$. With this, by comparing all the possible arrangements of $A<B$, we conclude that we can restrict our attention to $A\in[\underline{\pi},\pi_*)$ and $B\in(\pi^*,\overline{\pi}]$. 
		
		As there is no possible solution for \eqref{eqn:Hprime.equality} if either $A<\underline{\pi}$ or $B>\overline{\pi}$ we conclude on the strength of Proposition \eqref{prop:Astar.Bstar} that there is a unique pair of points $(A^*,B^*)$ subject to $A^*\leq\pi_*<\pi^*\leq B^*$ satisfying \eqref{eqn:fb3} and \eqref{eqn:fb5}. This implies that the function $V(\pi)$ defined by \eqref{eqn:sol.free.bdy} uniquely satisfies \eqref{eqn:fb1}-\eqref{eqn:fb5} and \eqref{eqn:fb7}.
		
		Therefore, the last task required to complete the proof is to verify \eqref{eqn:fb6} for $V(\pi)$ in \eqref{eqn:sol.free.bdy} which amounts to checking that $V(\pi;A^*)<g(\pi)$ on $(A^*,B^*)$. 
		We have that, 
		\begin{align*}
			V(\pi;A^*)-g(\pi)
			&=\int_{A^*}^\pi V'(u;A^*)-g'(u)du=\int_{A^*}^\pi 2K^{-1}\Psi'(u)+H'(A^*)-g'(u)du=\int_{A}^\pi H'(A^*)-H'(u)du.
		\end{align*}
		Now, $F(\pi):=H'(A^*)-H'(\pi)$ is such that $F(A^*)=F(B^*)=0$, and $F(\pi)$ is decreasing on $(A^*,\pi_*)$, increasing on $(\pi_*,\pi^*)$, and decreasing on $(\pi^*,B^*)$. Consequently, $F(\pi)<0$ near $A^*$, $F(\pi)>0$ near $B^*$ and there exists a unique $\tilde{\pi}\in (A^*,B^*)$ such that $F(\tilde{\pi})=0$. On $(A^*,\tilde{\pi}]$, $V(\pi;A^*)-g(\pi)=\int_{A^*}^\pi F(u)du<0$. Similarly, on $[\tilde{\pi}, B^*)$ we have
		$V(\pi;A^*)-g(\pi)=-\int_{\pi}^{B^*} F(u)du<0$ as required.
%
	\end{proof}
	
	\section{Verification and characterization of the value function}\label{sec:verification}
	
	We will now verify that the solution to the free boundary problem coincides with the value function of the sequential testing problem. The structure of the solution will also reveal that the value function admits the decomposition
	$V_*=2K^{-1}\Psi+H_*$
	where $H_*$ is the convex envelope of $H$.
	
	
	
	
	
\begin{theorem}[Verification]\label{thm:verification}If $\mathcal{A} g(\pi_0)\geq -K^{-1}$, then $V_*\equiv g$. If $\mathcal{A} g(\pi_0)< -K^{-1}$, the value function, $V_*(\pi)$, coincides with $V(\pi)$ in \eqref{eqn:sol.free.bdy}.
	\end{theorem}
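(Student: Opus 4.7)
I will prove Theorem~\ref{thm:verification} by the classical verification strategy, applying It\^o's formula to suitable candidates and exploiting the fact that $\Pi$ is a bounded continuous martingale on $[0,1]$ whose infinitesimal generator is $\mathcal{L}=(\alpha^2/\sigma^2)\mathcal{A}$. Recall that $K^{-1}=c\sigma^2/\alpha^2$, so the statement ``$\mathcal{A}h+K^{-1}\geq 0$'' translates into ``$\mathcal{L}h+c\geq 0$''. I treat the two cases of the theorem separately.

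\textbf{Trivial case.} When $\mathcal{A}g(\pi_0)\geq -K^{-1}$, Assumption~(G2) promotes this to $\mathcal{A}g\geq -K^{-1}$ on all of $(0,1)$, hence $\mathcal{L}g+c\geq 0$. Applying It\^o to $g(\Pi_{\tau\wedge t})$ for any $\tau\in\mathcal{T}$, taking expectations (the stochastic-integral term is a true martingale since $g'$ is locally bounded away from $0,1$ and $\Pi_s\in(0,1)$ stays strictly inside $(0,1)$), and passing to the limit $t\to\infty$ via Fatou gives $\mathbb{E}_\pi[c\tau+g(\Pi_\tau)]\geq g(\pi)$. Thus $V_*\geq g$, and the reverse inequality is immediate from the choice $\tau\equiv 0$.

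\textbf{Non-trivial case.} Let $V$ be the solution provided by Theorem~\ref{thm:free.bdy}, with boundaries $A^*<B^*$ satisfying $A^*\leq \pi_*$ and $B^*\geq \pi^*$. For the upper bound I use $\tau^*:=\tau_{A^*,B^*}$. Because $V\in C^1(0,1)\cap C^2((0,1)\setminus\{A^*,B^*\})$ and $\langle\Pi\rangle$ is absolutely continuous, the generalized It\^o formula applies to $V(\Pi_{\tau^*\wedge t})$; using $\mathcal{L}V=-c$ on $(A^*,B^*)$ and the boundedness of $V'$ on $[A^*,B^*]$ yields
\[\mathbb{E}_\pi[V(\Pi_{\tau^*\wedge t})]=V(\pi)-c\,\mathbb{E}_\pi[\tau^*\wedge t].\]
Letting $t\to\infty$ (bounded convergence on the left, monotone convergence on the right) shows simultaneously that $\mathbb{E}_\pi[\tau^*]<\infty$ and that $V(\pi)=\mathbb{E}_\pi[c\tau^*+g(\Pi_{\tau^*})]\geq V_*(\pi)$, since $V(\Pi_{\tau^*})=g(\Pi_{\tau^*})$ by \eqref{eqn:fb7}. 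For the lower bound, I need $V$ to be a subsolution of $\mathcal{A}V+K^{-1}\geq 0$ on the whole of $(0,1)$: this holds with equality on $(A^*,B^*)$ by \eqref{eqn:fb1}, and outside $(A^*,B^*)$ it follows from $V=g$ combined with the inclusion $[0,A^*]\cup[B^*,1]\subset(0,1)\setminus U$, i.e.\ $\mathcal{A}g\geq -K^{-1}$ there, which is a direct consequence of $A^*\leq\pi_*<\pi^*\leq B^*$ and the definition of $U$. Therefore, for any $\tau\in\mathcal{T}$, It\^o gives $\mathbb{E}_\pi[V(\Pi_{\tau\wedge t})]\geq V(\pi)-c\,\mathbb{E}_\pi[\tau\wedge t]$, and then $V\leq g$ on $(0,1)$ (from \eqref{eqn:fb6}--\eqref{eqn:fb7}) together with boundedness of $V$ and $g$ allows the passage $t\to\infty$ to yield $V_*(\pi)\geq V(\pi)$.

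\textbf{Main obstacles.} The delicate steps are (i) the legitimacy of It\^o at the free boundaries $A^*$ and $B^*$, where $V''$ only has one-sided limits; this is handled either via a Meyer--Tanaka-type extension (using that the local time of $\Pi$ at any single point vanishes because $\langle\Pi\rangle$ is absolutely continuous) or by a direct mollification argument, since $V''$ is locally bounded on compact subsets of $(0,1)$. The second subtle point is (ii) ensuring that the local-martingale parts are true martingales and that the terminal expectations have proper limits when the loss has singular derivatives near $\{0,1\}$ (as in the cross-entropy example). This is controlled by the fact that under $\tau^*$ the process stays inside $[A^*,B^*]\subset(0,1)$ where $V,V',V''$ are bounded, and by working with $\tau\wedge t\wedge\tau_n$ with $\tau_n:=\inf\{s:\Pi_s\notin[1/n,1-1/n]\}$ in the lower-bound step, followed by the observation that $\Pi$ almost surely converges in $\{0,1\}$ and that $g,V$ vanish there.
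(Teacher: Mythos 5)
Your proposal is correct and follows essentially the same route as the paper: the paper's own proof simply cites the standard verification arguments (for the trivial case, analogous to \cite[Prop.~3.1]{irle2004solving}; for the non-trivial case, as in \cite[Thm.~21.1]{peskir2006optimal}) and then isolates exactly the one delicate step you identify, namely showing the stochastic integral is a true martingale. You handle it by localization with $\tau_n$ and bounded/monotone convergence, using boundedness of $g$ and $V$; the paper instead invokes the global estimate $|\pi(1-\pi)g'(\pi)|\leq M$ so that the integrand of the stochastic integral is bounded uniformly and no localization is needed. Both are valid.

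One small caution: in your ``trivial case'' paragraph, the inline justification that the stochastic integral is a true martingale because ``$g'$ is locally bounded away from $0,1$ and $\Pi_s$ stays strictly inside $(0,1)$'' is not, by itself, a valid argument --- $\Pi_s$ can approach the endpoints, and $g'$ may be unbounded there (cross-entropy). You do repair this in your ``Main obstacles'' paragraph via the $\tau_n$ localization, so the proof stands, but that repair should be applied to the trivial case as well, not only to the lower-bound step of the non-trivial case. Alternatively, the uniform bound on $\pi(1-\pi)g'(\pi)$ (the paper's route) sidesteps the issue entirely.
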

	
	\begin{proof}
		The former claim is proved analogously to \cite[Proposition 3.1]{irle2004solving}. The arguments for the latter proceed as in \cite[Theorem 21.1]{peskir2006optimal}. Up to showing the martingale property of a stochastic integral, all verification arguments are standard. To this end, Remark 2.3 of \cite{campbell2024mfgseqtest} gives the necessary estimate $\left|\pi(1-\pi)g'(\pi)\right|\leq M$ for some $M>0$.
	\end{proof}
	
	By standard optimal stopping theory, we easily obtain the form of the smallest optimal stopping time.
	
	\begin{corollary} The smallest optimal stopping time $\tau_*$ of \eqref{eqn:problem} is given by $\tau_*\equiv 0$ if $\mathcal{A} g(\pi_0)\geq -K^{-1}$ and $\tau_*\equiv\tau_{A^*,B^*}:=\inf\{t\geq0:\Pi_t\not\in(A^*,B^*)\}$, otherwise.
	\end{corollary}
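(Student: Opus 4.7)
My plan is to apply the standard optimal-stopping characterization (e.g.\ Peskir--Shiryaev, Chapter~III, Theorem~2.4), which says that for an optimal stopping problem of the form $V_*(\pi) = \inf_\tau \mathbb{E}_\pi[c\tau + g(\Pi_\tau)]$ with continuous payoff and a Feller diffusion, the smallest optimal stopping time is the first entry time of $\Pi$ into the stopping set $D := \{\pi \in [0,1] : V_*(\pi) = g(\pi)\}$. The remainder of the argument is then simply to read off $D$ in each of the two regimes from Theorem~\ref{thm:verification}.

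In the regime $\mathcal{A}g(\pi_0) \geq -K^{-1}$, Theorem~\ref{thm:verification} gives $V_* \equiv g$, so $D = [0,1]$ and the first entry time is trivially $\tau_* \equiv 0$ from every starting point $\pi \in (0,1)$. In the regime $\mathcal{A}g(\pi_0) < -K^{-1}$, Theorem~\ref{thm:verification} together with the defining properties \eqref{eqn:fb6}--\eqref{eqn:fb7} of the function $V$ in \eqref{eqn:sol.free.bdy} pin down $D = [0, A^*] \cup [B^*, 1]$, whose first entry time is precisely $\tau_{A^*, B^*} = \inf\{t \geq 0 : \Pi_t \notin (A^*, B^*)\}$ (for $\pi$ already outside $(A^*, B^*)$ the infimum is $0$, as it should be).

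The only technical point is to verify that the hypotheses of the optimal-stopping characterization are met in our setting. Continuity of $V_*$ on $[0,1]$ is immediate from the $C^1$ pasting at $A^*, B^*$ together with $V_* = g \in C^2(0,1)$ outside the continuation region; the strong Markov/Feller property of $\Pi$ follows from its SDE, which is locally Lipschitz on any compact subset of $(0,1)$; and $\mathbb{E}_\pi[\tau_{A^*, B^*}] < \infty$ is standard since the diffusion coefficient $(\alpha/\sigma)\pi(1-\pi)$ is bounded away from zero on the compact interval $[A^*, B^*]$, yielding a routine exit-time estimate. I do not anticipate any genuine obstacle beyond these routine checks; the work has already been done in proving Theorems~\ref{thm:free.bdy} and \ref{thm:verification}, and the corollary is essentially a bookkeeping step.
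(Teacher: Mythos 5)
Your proposal matches the paper's approach: the corollary is stated without a proof environment, with the implicit justification ``by standard optimal stopping theory,'' which is exactly the Peskir--Shiryaev characterization of the smallest optimal stopping time as the first entry time into $\{V_* = g\}$ that you invoke, with $D$ then read off from Theorem~\ref{thm:verification}. Your write-up simply makes explicit the routine hypothesis checks (continuity of $V_*$, Feller property of $\Pi$, finiteness of $\mathbb{E}_\pi[\tau_{A^*,B^*}]$) that the paper leaves to the reader, and these are all correct.
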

	
	We now turn to check the representation of $V_*$ in terms of the convex envelope of $H$. This structure can be intuited from the construction of the solution to the free boundary problem in Section \ref{sec:free.bdy.problem} where $V_*-2K^{-1}\Psi$ is equal to $H$ away from $(A^*,B^*)$ and is an affine function on $(A^*,B^*)$. A formal probabilistic justification is given in the proof of the following theorem.
	
	\begin{theorem}\label{thm:convex.envelope}
		$V_*-2K^{-1}\Psi$ defines the convex envelope of $H$ on $(0,1)$ and satisfies:
		\[V_*(\pi)-2K^{-1}\Psi(\pi)=\inf_{\tau\in\mathcal{T}}\mathbb{E}_\pi \left[H(\Pi_{\tau})\right].\]
	\end{theorem}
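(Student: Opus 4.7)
The plan is to first derive the integral representation via It\^{o}'s formula and then identify the resulting value function with the convex envelope of $H$. The key algebraic identity driving everything is $\mathcal{A}(2K^{-1}\Psi)\equiv -K^{-1}$, equivalently $\mathcal{L}(2K^{-1}\Psi)\equiv -c$, which is immediate from the fact that $V(\pi;A)$ in \eqref{eqn:V.pi.A} solves \eqref{eqn:fb1} while differing from $2K^{-1}\Psi$ only by an affine term (annihilated by $\mathcal{A}$).

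For the representation, I would apply It\^{o}'s formula to $2K^{-1}\Psi(\Pi_t)+ct$. The resulting stochastic integral has integrand proportional to $\Pi_s(1-\Pi_s)\Psi'(\Pi_s)$, which is bounded on $(0,1)$ by direct computation, hence is a true martingale; this is the main technical hurdle, since $\Psi$ itself blows up at the endpoints of $[0,1]$, and it is handled by essentially the same boundedness estimate invoked in Theorem \ref{thm:verification}. Optional stopping at $\tau\wedge T$ followed by $T\to\infty$ (justified for $\tau$ with $\mathbb{E}_\pi[\tau]<\infty$; stopping times with infinite expectation give infinite cost and may be excluded from both infima) yields
\[\mathbb{E}_\pi[c\tau+g(\Pi_\tau)]=2K^{-1}\Psi(\pi)+\mathbb{E}_\pi[H(\Pi_\tau)],\]
and passing to the infimum delivers the integral representation.

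For the convex envelope identification, set $W_*:=V_*-2K^{-1}\Psi$ and let $\hat{H}$ denote the convex envelope of $H$ on $(0,1)$. Since $\Pi$ is a bounded martingale, $\mathbb{E}_\pi[\Pi_\tau]=\pi$ for every $\tau\in\mathcal{T}$, so for any convex minorant $\phi\leq H$, Jensen's inequality gives $\phi(\pi)\leq \mathbb{E}_\pi[\phi(\Pi_\tau)]\leq\mathbb{E}_\pi[H(\Pi_\tau)]$; passing to $\sup_\phi\inf_\tau$ yields $\hat{H}\leq W_*$. For the reverse bound, I would show that $W_*$ is itself a convex minorant of $H$ by reading off the explicit form of $V_*$ from Theorem \ref{thm:verification}. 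When $\mathcal{A}g(\pi_0)\geq -K^{-1}$, condition (G2) forces $\mathcal{A}g\geq -K^{-1}$ globally, so $H''\geq 0$ and $W_*=H$ is itself convex. Otherwise $W_*$ agrees with $H$ on $[0,A^*]\cup[B^*,1]$ and with the common tangent line on $[A^*,B^*]$; convexity then follows from Corollary \ref{cor:convexity.concavity.H} combined with $A^*\leq\pi_*<\pi^*\leq B^*$, while $W_*\leq H$ on $(A^*,B^*)$ is precisely the strict inequality $V(\pi;A^*)<g(\pi)$ established at the close of the proof of Theorem \ref{thm:free.bdy}. Combining the two bounds gives $W_*=\hat{H}$, completing the proof.
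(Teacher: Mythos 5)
Your argument is correct and rests on the same pillars as the paper's proof: the annihilation identity $2K^{-1}\mathcal{L}\Psi\equiv -c$, the Jensen/bounded-martingale bound showing any convex minorant $\phi$ of $H$ satisfies $\phi(\pi)\leq\mathbb{E}_\pi[H(\Pi_\tau)]$ for all $\tau$, and the observation that the explicit free-boundary solution makes $W_*:=V_*-2K^{-1}\Psi$ a convex minorant of $H$. The one genuine difference is the mechanism behind the It\^{o}/Dynkin step. The paper invokes \cite[Proposition 2.1]{irle2004solving} to reduce the infimum to ``regular'' stopping times dominated by exit times of $\Pi$ from compact subintervals, on which $\Psi$ and $\Psi'$ are uniformly bounded, so Dynkin's formula applies with no integrability afterthought. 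You instead apply It\^{o} to $2K^{-1}\Psi(\Pi_t)+ct$ for arbitrary $\tau$ with $\mathbb{E}_\pi[\tau]<\infty$, invoking boundedness of $\pi(1-\pi)\Psi'(\pi)$ so the local martingale is $L^2$-bounded up to $\tau$ and optional sampling is legitimate; that works, although the passage $T\to\infty$ through $\mathbb{E}_\pi[\Psi(\Pi_{\tau\wedge T})]$ is precisely where that uniform-integrability content is needed, since $\Psi$ itself is unbounded near the endpoints. One small imprecision: the aside that ``stopping times with infinite expectation $\ldots$ may be excluded from both infima'' is not obviously true for the right-hand infimum $\inf_\tau\mathbb{E}_\pi[H(\Pi_\tau)]$ --- there is no a priori reason $\mathbb{E}_\pi[\tau]=\infty$ forces $\mathbb{E}_\pi[H(\Pi_\tau)]=\infty$. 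It is harmless here because your Jensen-minorant step already gives $\mathbb{E}_\pi[H(\Pi_\tau)]\geq W_*(\pi)$ for \emph{every} $\tau\in\mathcal{T}$, and combined with your identity over finite-expectation $\tau$ this closes the two-sided bound; just tie those together explicitly rather than appealing to exclusion of bad $\tau$ from the right-hand infimum.
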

	\begin{proof}
		First observe as above that by our construction in Section \ref{sec:free.bdy.problem}, $V_*-2K^{-1}\Psi$ is a convex function that minorizes $H$. We will say that a stopping time $\tau$ is regular if it is bounded by the first exist time of $\Pi$ from some compact interval. With this definition we are able to obtain the chain of inequalities: 
		\begin{align*}
			V_*(\pi)-2K^{-1}\Psi(\pi)&=\inf_{\tau\in\mathcal{T}}\mathbb{E}_\pi \left[c\tau+g(\Pi_{\tau})-2K^{-1}\Psi(\pi)\right]\\
			&=\inf_{\tau\in\mathcal{T}: \ \tau \ \mathrm{is \ regular}}\mathbb{E}_\pi \left[c\tau+H(\Pi_{\tau})+2K^{-1}\Psi(\Pi_{\tau})-2K^{-1}\Psi(\pi)\right]\\
			&=\inf_{\tau\in\mathcal{T}: \ \tau \ \mathrm{is \ regular}}\mathbb{E}_\pi \left[\int_0^\tau c+2K^{-1}\mathcal{L}\Psi(\Pi_t)dt+H(\Pi_{\tau})\right]\\
			&=\inf_{\tau\in\mathcal{T}:\  \ \tau \ \mathrm{is \ regular}}\mathbb{E}_\pi \left[H(\Pi_{\tau})\right]\\
			&\geq \inf_{\tau\in\mathcal{T}}\mathbb{E}_\pi \left[H(\Pi_{\tau})\right]\\
			&\geq \inf_{\tau\in\mathcal{T}}\mathbb{E}_\pi \left[V_*(\Pi_{\tau})-2K^{-1}\Psi(\Pi_{\tau})\right]\\
			&\geq V_*(\pi)-2K^{-1}\Psi(\pi).
		\end{align*}
		In the second equality we have used \cite[Proposition 2.1]{irle2004solving} to obtain that it is sufficient to take the infimum defining $V_*$ over regular stopping times. The third equality then follows by Dynkin's formula, and the fourth by the identity $2K^{-1}\mathcal{L}\Psi=-c$. The last inequality follows from convexity, Jensen's inequality, and the optional sampling theorem since $\Pi$ is a bounded martingale. We obtain that:
		\[V_*(\pi)-2K^{-1}\Psi(\pi)=\inf_{\tau\in\mathcal{T}}\mathbb{E}_\pi \left[H(\Pi_{\tau})\right].\]
		Now, suppose $F$ is any other convex function lying below $H$. We obtain again:
		\[\inf_{\tau\in\mathcal{T}}\mathbb{E}_\pi \left[H(\Pi_{\tau})\right]\geq \inf_{\tau\in\mathcal{T}}\mathbb{E}_\pi \left[F(\Pi_{\tau})\right]\geq F(\pi),\]
		which gives $V_*(\pi)-2K^{-1}\Psi(\pi)\geq F(\pi)$ and so $V_*-2K^{-1}\Psi$ is the largest convex minorant of $H$. The claim follows.
	\end{proof}
	
	This characterization is reminiscent of the result for problems without a linear time penalty in classic optimal stopping theory  where the value function is the largest subharmonic minorant of the cost function.
	
	\section{Asymptotic behavior in the information ratio}\label{sec:inf.ratio}
	
	If $c=1$ then $K=(\alpha/\sigma)^2$ is the squared information/signal-to-noise ratio. This is the main parameter of the sequential testing problem and it has an intuitive interpretation. In this section we are interested in studying the asymptotic behavior of the solution in $K$ and more generally, how the problem behaves as $K$ is varied. To facilitate this analysis, in the sequel all the constants in Section \ref{sec:free.bdy.problem} and \ref{sec:verification} will have their dependence on $K$ emphasized (i.e. we will write $A^*=A^*(K)$, etc.).  
	
	Define $\beta:=\max_{\pi\in[0,1]}|\mathcal{A}g|$ (which is well defined and equal to $|\mathcal{A}g(\pi_0)|$ by Assumption \ref{ass:g.scst}) so that $\mathcal{A}g\geq -\beta$. It is clear from Theorem \ref{thm:verification} that the boundaries are only well defined if $K>\beta^{-1}$ and so we will restrict our attention to this setting. The first result of this section establishes the differentiability of the boundaries in $K$ and an expression for the derivatives.

	\begin{proposition}
		$A^*(K)$ and $B^*(K)$ are continuously differentiable on $(\beta^{-1},\infty)$ with:
		\[\frac{d}{dK}A^*(K)=\frac{2}{K^2}\frac{\Psi\left(B^*(K)\right) -\Psi \left(A^*(K) \right) - \Psi '\left(A^*(K)\right)\left(B^*(K)-A^*(K)\right)}{\left[g''(A^*(K))-2K^{-1}\Psi''(A^*(K))\right]\left(B^*(K)-A^*(K)\right)},
		\]
		\[\frac{d}{dK}B^*(K)=\frac{2}{K^2}\frac{\Psi\left(B^*(K)\right) -\Psi \left(A^*(K) \right) - \Psi '\left(B^*(K)\right)\left(B^*(K)-A^*(K)\right)}{\left[g''(B^*(K))-2K^{-1}\Psi''(B^*(K))\right]\left(B^*(K)-A^*(K)\right)}.
		\]  
	\end{proposition}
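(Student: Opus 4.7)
My plan is to apply the implicit function theorem to the system \eqref{eqn:H.tangent.secant}-\eqref{eqn:Hprime.equality}. To make the $K$-dependence explicit, write $H_K(\pi) := g(\pi) - 2K^{-1}\Psi(\pi)$ and define
\[F_1(A,B,K) := H_K(B) - H_K(A) - H_K'(A)(B-A), \qquad F_2(A,B,K) := H_K'(A) - H_K'(B).\]
By Theorem \ref{thm:free.bdy} and Proposition \ref{prop:Astar.Bstar}, for each $K \in (\beta^{-1},\infty)$ the system $F_1 = F_2 = 0$ admits the unique solution $(A^*(K), B^*(K))$ with $A^*(K)$ \emph{interior} to $[\underline{\pi},\pi_*]$ and $B^*(K)$ interior to $[\pi^*,\overline{\pi}]$. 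Since $F_1,F_2$ are jointly $C^1$ in $(A,B,K)$, both the differentiability claim and the explicit derivative formulas will follow from the implicit function theorem once the Jacobian in $(A,B)$ is shown to be invertible at the solution.

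I would first compute
\[\partial_A F_1 = -H_K''(A)(B-A), \quad \partial_B F_1 = H_K'(B) - H_K'(A), \quad \partial_A F_2 = H_K''(A), \quad \partial_B F_2 = -H_K''(B).\]
The key observation is that the tangency equation \eqref{eqn:Hprime.equality} forces $\partial_B F_1 = 0$ at the solution, so the Jacobian
\[J(A^*, B^*, K) = \begin{pmatrix} -H_K''(A^*)(B^* - A^*) & 0 \\ H_K''(A^*) & -H_K''(B^*) \end{pmatrix}\]
is lower triangular with determinant $H_K''(A^*)\,H_K''(B^*)\,(B^* - A^*)$. By Corollary \ref{cor:convexity.concavity.H}, combined with the interiority noted above, $A^*$ lies in the strict convexity region of $H$ contained in $(0,\pi_*)$ and $B^*$ in that contained in $(\pi^*,1)$, so both $H_K''(A^*), H_K''(B^*)$ are strictly positive and the determinant is strictly positive. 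This invertibility step is the only genuinely substantive piece of the argument, and it is immediate once the geometry from Section \ref{sec:free.bdy.problem} is invoked.

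To finish, I would differentiate in $K$, obtaining
\[\partial_K F_1 = 2K^{-2}\bigl[\Psi(B) - \Psi(A) - \Psi'(A)(B-A)\bigr], \qquad \partial_K F_2 = 2K^{-2}\bigl[\Psi'(A) - \Psi'(B)\bigr],\]
and apply the IFT, which gives $(dA^*/dK,\, dB^*/dK)^\top = -J^{-1}(\partial_K F_1, \partial_K F_2)^\top$. Since $J$ is lower triangular, $J^{-1}$ is written down by inspection: reading off its first row with the substitution $H_K''(A^*) = g''(A^*) - 2K^{-1}\Psi''(A^*)$ yields the stated expression for $dA^*/dK$ directly (the $\partial_K F_2$ contribution drops out because $(J^{-1})_{12}=0$). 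For $dB^*/dK$, both rows of $J^{-1}$ contribute; a one-line algebraic simplification combines the two terms so that $\Psi'(A^*)(B^*-A^*)$ is effectively replaced by $\Psi'(B^*)(B^*-A^*)$ in the numerator, producing the second formula. Apart from the Jacobian invertibility, everything else is routine bookkeeping, and I anticipate no additional obstacle.
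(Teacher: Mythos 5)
Your proof is correct and follows essentially the same route as the paper: apply the implicit function theorem to the system \eqref{eqn:H.tangent.secant}--\eqref{eqn:Hprime.equality}, verify the Jacobian is invertible using the sign of $H''$ at the tangency points (via Corollary~\ref{cor:convexity.concavity.H}, equivalently Lemma~\ref{lem:Hprime}), and read off the derivative formulas. The one small refinement you add---observing that \eqref{eqn:Hprime.equality} makes the Jacobian lower triangular at the solution, so $J^{-1}$ is immediate by inspection---makes the final bookkeeping cleaner but is not a substantive departure.
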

	
	\begin{proof}
		We have that $A^*(K)$ and $B^*(K)$ are the unique solutions on $(\pi_*(K),\pi^*(K))^C$ of equations \eqref{eqn:H.tangent.secant}-\eqref{eqn:Hprime.equality}. Write $H=H(\pi;K)$ to emphasize the dependence on $K$. Then, these equations read:
		\begin{equation}\label{eqn:pf.1}
			\partial_\pi H(A;K)-\partial_\pi H(B;K)=0, \ \ \ 
			H(B;K)-H(A;K)-\partial_\pi H(A;K)(B-A)=0.
		\end{equation}
		We will endeavor to apply the implicit function theorem. Treating the left hand side of the equalities above as functions of $(K,A,B)$ we obtain that the determinant of the Jacobian matrix for the partial derivatives with respect to $A$ and $B$ is:
		\[\partial_{\pi\pi} H(A;K)\left[\partial_{\pi}H(B;K)-\partial_\pi H(A;K)\right]-\left[-\partial_{\pi\pi}H(B;K)\right]\left[-\partial_{\pi\pi} H(A;K)(B-A)\right].\]
		By the satisfaction of equations \eqref{eqn:pf.1} at the tuple $(K,A,B)$ this reduces to:
		\[-\partial_{\pi\pi}H(B;K)\partial_{\pi\pi} H(A;K)(B-A).\]
		Now by Lemma \ref{lem:Hprime} we have that both $\partial_{\pi\pi}H(A;K)\not=0$ and $\partial_{\pi\pi}H(B;K)\not=0$ when the equations are satisfied. Moreover, we must have $B>A$ and so the determinant is non-zero. It follows from the implicit function theorem that there exist continuously differentiable solutions $A^*(K)$ and $B^*(K)$ to the equations in a neighborhood of an admissible $K$ such that (after simplification):
		\begin{align*}\frac{d}{dK}&A^*(K)=\frac{\partial_KH(B^*(K);K)-\partial_KH(A^*(K);K)-\partial_{K}\partial_\pi H(A^*(K);K)(B^*(K)-A^*(K))}{\partial_{\pi\pi}H(A^*(K);K)(B^*(K)-A^*(K))},
		\end{align*}
		\begin{align*}\frac{d}{dK}&B^*(K)=\frac{\partial_KH(B^*(K);K)-\partial_KH(A^*(K);K)-\partial_{K}\partial_\pi H(B^*(K);K)(B^*(K)-A^*(K))}{\partial_{\pi\pi}H(B^*(K);K)(B^*(K)-A^*(K))}.
		\end{align*}
		Evaluating these expressions then gives the equations in the statement of the Proposition.
	\end{proof}
	
	This gives us an understanding of how the boundaries change locally. On the other hand, is also possible to characterize the limiting behavior of the boundaries.
	
	\begin{proposition}\mbox{}
		\begin{enumerate}
			\item[(i)] $A^*(K)$ is decreasing and $B^*(K)$ is increasing.
			\item[(ii)] If $K\uparrow \infty$ then $A^*(K)\downarrow 0$ and $B^*(
			K)\uparrow 1$.
			\item[(iii)] If $K\downarrow \beta^{-1}$ then $A^*(K)\uparrow \pi_0$ and $B^*(K)\downarrow \pi_0$.
		\end{enumerate}
	\end{proposition}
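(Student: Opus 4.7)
The plan is to exploit the monotonicity of $V_*(\pi;K)$ in $K$ together with the tangency characterization of $(A^*,B^*)$ developed in Section~\ref{sec:free.bdy.problem}.

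For (i), the cleanest route is first to show $V_*(\cdot;K)$ is pointwise nonincreasing in $K$. A time rescaling $s=(\alpha^2/\sigma^2)t$ turns the posterior into $\tilde\Pi_s$ with $d\tilde\Pi_s=\tilde\Pi_s(1-\tilde\Pi_s)d\hat W_s$ for a standard Brownian motion $\hat W$, and the original problem rewrites as $V_*(\pi;K)=\inf_\tau\mathbb{E}_\pi[K^{-1}\tau+g(\tilde\Pi_\tau)]$, where $\tilde\Pi$ has dynamics independent of $K$. The integrand is pointwise decreasing in $K$, so $V_*$ is too. Since the continuation region equals $\{V_*(\cdot;K)<g\}=(A^*(K),B^*(K))$, it nondecreases in $K$ under set inclusion, which translates directly into $A^*$ being decreasing and $B^*$ being increasing.

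For (ii), I would use the deterministic rule $\tau=T$ in the rescaled formulation to obtain $V_*(\pi;K)\le K^{-1}T+\mathbb{E}_\pi[g(\tilde\Pi_T)]$. By standard filtering, $\tilde\Pi_T\to\theta\in\{0,1\}$ a.s.\ as $T\to\infty$, and because $g(0)=g(1)=0$ with $g$ continuous and bounded, bounded convergence makes $\mathbb{E}_\pi[g(\tilde\Pi_T)]$ small by choosing $T$ large and then $K^{-1}T$ small by choosing $K$ large; hence $V_*(\pi;K)\to 0$ for every $\pi\in(0,1)$. Concavity of $g$, $g(0)=g(1)=0$, together with the non-triviality built into (G2), force $g>0$ on $(0,1)$. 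If the monotone limit $a_\infty:=\lim_{K\to\infty}A^*(K)$ were strictly positive, any $\pi_\circ\in(0,a_\infty)$ would lie in the stopping region for every $K$, giving $V_*(\pi_\circ;K)=g(\pi_\circ)>0$ for all $K$---contradicting $V_*(\pi_\circ;K)\to 0$. The argument for $B^*(K)\uparrow 1$ is symmetric.

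For (iii), monotonicity from (i) guarantees the limits $a:=\lim_{K\downarrow\beta^{-1}}A^*(K)$ and $b:=\lim_{K\downarrow\beta^{-1}}B^*(K)$ exist. The sandwich $A^*(K)\le\pi_*(K)\le\pi_0\le\pi^*(K)\le B^*(K)$, together with $\pi_*(K),\pi^*(K)\to\pi_0$ as $K\downarrow\beta^{-1}$ (since (G2) makes $\pi_0$ the unique minimizer of $\mathcal{A}g$ and $U(K)\downarrow\{\pi_0\}$), gives $a\le\pi_0\le b$. Passing to the limit in \eqref{eqn:H.tangent.secant}--\eqref{eqn:Hprime.equality} by joint continuity of $H$ and $H'$ in $(\pi,K)$, the pair $(a,b)$ must admit a common tangent for $H(\cdot;\beta^{-1})$. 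A direct computation gives
\[H''(\pi;\beta^{-1})=\frac{2\bigl(\mathcal{A}g(\pi)+\beta\bigr)}{\pi^2(1-\pi)^2},\]
which is nonnegative and vanishes only at $\pi_0$, so $H'(\cdot;\beta^{-1})$ is strictly increasing and $H(\cdot;\beta^{-1})$ is strictly convex on $(0,1)$. Strict convexity precludes a common tangent at two distinct points, so $a=b$, and combined with the sandwich this forces $a=b=\pi_0$. The main obstacle is in this last step: one must carefully justify the passage to the limit in the tangency relations and, despite the degeneracy $H''(\pi_0;\beta^{-1})=0$, extract strict convexity of $H(\cdot;\beta^{-1})$ on all of $(0,1)$. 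Parts (i) and (ii) are comparatively direct once $V_*(\pi;K)$ is known to be decreasing in $K$ and to vanish in the large-$K$ limit.
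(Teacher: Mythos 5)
Your proposal is correct but takes a genuinely different route from the paper for parts (i) and (ii). For (i), the paper reads the sign of the boundaries' derivatives directly off the explicit formulas in the preceding proposition (strictly positive denominator, strictly signed numerator by strict concavity of $\Psi$), giving strict monotonicity. Your argument instead rescales time so that $V_*(\pi;K)=\inf_{\tau}\mathbb{E}_\pi[K^{-1}\tau+g(\tilde\Pi_\tau)]$ with $K$-independent dynamics, deduces that $V_*$ is pointwise nonincreasing in $K$, and translates this into growth of the continuation region $\{V_*<g\}=(A^*,B^*)$; this is cleaner and more conceptual, and does not rely on the implicit-function-theorem formulas, but as stated it only yields weak monotonicity of $A^*$ and $B^*$, a small discrepancy with the paper's strict claim (though weak monotonicity is all you use downstream). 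For (ii), the paper uses the analytic sandwich $0\le A^*(K)\le\pi_*(K)\downarrow 0$ (via Lemma \ref{lem:Ag.limits}), whereas you show $V_*(\pi;K)\to 0$ for every fixed $\pi$ by testing with a deterministic $\tau=T$ in the rescaled problem and combine this with $g>0$ on $(0,1)$; both are valid, though yours does more work than the sandwich (which you already had available from Theorem~\ref{thm:free.bdy}). For (iii), you and the paper follow essentially the same template: pass to the limit in the tangency relations \eqref{eqn:H.tangent.secant}--\eqref{eqn:Hprime.equality} and exploit that $H'(\cdot;\beta^{-1})$ is strictly increasing on $(0,1)$ to force the limit tangency points to coincide; the paper sandwiches $A^*,B^*$ between $\underline\pi,\overline\pi$ and uses Bolzano--Weierstrass, whereas your use of monotonicity from (i) gives the limits $a,b$ directly and streamlines the argument. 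One small point to make explicit: you should note that $a$ and $b$ are bounded away from $0$ and $1$ (which follows from $A^*(K)\ge A^*(K_0)>0$ for any fixed $K_0>\beta^{-1}$ by your part (i), and the analogous bound for $B^*$), so that the limit passage in $H'$ stays in the interior where $H'$ is jointly continuous.
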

	\begin{proof}
		(i) follows from the fact that the derivative of $A^*(K)$ (resp. $B^*(K)$) must be positive (resp. negative). This can be seen since both denominators in the expressions must be (strictly) positive when evaluated at the boundaries (see Lemma \ref{lem:Hprime}) and the numerator is negative for $A^*(K)$ (resp. positive for $B^*(K)$) by the concavity of $\Psi$. 
		
		For (ii) first observe that $\pi_*(K)\downarrow 0$ (resp. $\pi^*(K)\uparrow 1$) as $K\to\infty$. This can be seen from the definition of $\pi_*(K)$ and $\pi^*(K)$ as the boundary points of \eqref{eqn:set.U}, the strict monotonicity of $\mathcal{A}g$ away from $\pi_0$ (see Assumption \ref{ass:g.scst}), and Lemma \ref{lem:Ag.limits} where it is obtained that $\lim_{\pi\downarrow0}\mathcal{A}g(\pi)=\lim_{\pi\uparrow 1}\mathcal{A}g(\pi)=0$. Since $0\leq A^*(K)\leq \pi_*(K)$ and $1\geq B^*(K)\geq \pi^*(K)$ the claim follows.
		
		To obtain (iii) we reason using the points $\underline{\pi}(K)$ and $\overline{\pi}(K)$ defined after Corollary \ref{cor:convexity.concavity.H} by \begin{equation}\label{eqn:underline.overline.pi}
			H'(\underline{\pi}(K);K)=H'(\pi^*(K);K) \ \ \ \mathrm{and} \ \ \   H'(\overline{\pi}(K);K)=H'(\pi_*(K);K)
		\end{equation} (here the dependence of $H$ on $K$ is again emphasized). Since $\pi_0\geq A^*(K) \geq \underline{\pi}(K)$ and $\pi_0\leq B^*(K)\leq \overline{\pi}(K)$, it suffices to show that  $ \underline{\pi}(K)\to \pi_0$ and $ \overline{\pi}(K)\to \pi_0$ as $K\downarrow \beta^{-1}$. As in (ii) it is not hard to see that $\pi_*(K)\uparrow \pi_0$ and $\pi^*(K)\downarrow \pi_0$ as $K\downarrow \beta^{-1}$. Let $(K_n)_{n\geq0}$ be a sequence such that $K_n\downarrow \beta^{-1}$. For $K_n>\beta^{-1}$, $\underline{\pi}(K_n)\leq \pi_0\leq \overline{\pi}(K_n)$ form the unique solution to \eqref{eqn:underline.overline.pi}. Since they are bounded, the Bolzano–Weierstrass theorem allows us to extract a convergent subsequence: $\underline{\pi}(K_{n'})\to a$ $\ \overline{\pi}(K_{n'})\to b$ as $n'\to\infty$ where $a\in[0,\pi_0]$ and $b\in[\pi_0,1]$. Taking limits along the subsequence across $\eqref{eqn:underline.overline.pi}$ and applying the (joint) continuity of $H'$ in $(\pi,K)$ we obtain $H'(a;\beta^{-1})=H'(\pi_0;\beta^{-1})$ and $H'(b;\beta^{-1})=H'(\pi_0;\beta^{-1})$.
		It is easy to check as in Lemma \ref{lem:Hprime} that $H'(\cdot; \beta^{-1})$ is strictly increasing on $(0,1)$. Hence, we must have $a=b=\pi_0$ and since the sequence was arbitrary we conclude as required that $ \underline{\pi}(K)\to \pi_0$ and $ \overline{\pi}(K)\to \pi_0$ as $K\downarrow \beta^{-1}$.
	\end{proof}
	
	Our understanding of the asymptotic behavior as $K\to \infty$ can be refined. In particular, it is possible to obtain a general bound on the rate at which $A^*(K)$ tends to $0$. 
	
	\begin{proposition}\label{prop:asymp.bds}
		For any $\epsilon>0$ there exists a sufficiently large $K_0=K_0(\epsilon)>\beta^{-1}$ and a constant $C=C(K_0,\epsilon)>0$ such that:
		\[A^*(K)\leq \frac{1}{1+CK^{1-\epsilon}}, \ \  B^*(K)\geq  \frac{CK^{1-\epsilon}}{1+CK^{1-\epsilon}}, \ \ \ \forall K\geq K_0.\]
	\end{proposition}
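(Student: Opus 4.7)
The plan is to show that for a suitable choice of $C$ and $K_0$, both $\phi_K:=1/(1+CK^{1-\epsilon})$ and $1-\phi_K$ lie strictly inside the continuation region $(A^*(K),B^*(K))$ for every $K\geq K_0$, which is exactly equivalent to the two claimed inequalities.

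The starting point is Theorem \ref{thm:convex.envelope}: a point $\pi$ lies in $(A^*,B^*)$ if and only if the convex envelope $H_*:=V_*-2K^{-1}\Psi$ satisfies $H_*(\pi)<H(\pi)$. For any $0<A_0<\pi<B_0<1$, applying $\tau_{A_0,B_0}:=\inf\{t:\Pi_t\not\in(A_0,B_0)\}$ in the infimum from Theorem \ref{thm:convex.envelope} and using that $\Pi$ is a bounded martingale --- so $\mathbb{P}_\pi(\Pi_{\tau_{A_0,B_0}}=A_0)=(B_0-\pi)/(B_0-A_0)$ --- yields
\[H_*(\pi)\leq\mathbb{E}_\pi[H(\Pi_{\tau_{A_0,B_0}})]=L_{A_0,B_0}(\pi),\]
where $L_{A_0,B_0}$ is the affine interpolation of $H$ between $A_0$ and $B_0$. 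Hence, to put a given $\pi$ inside the continuation region, it suffices to exhibit $A_0<\pi<B_0$ with $L_{A_0,B_0}(\pi)<H(\pi)$.

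I would try the symmetric choice $A_0:=\phi_K/2$ and $B_0:=1-\phi_K/2$, which contains both $\phi_K$ and $1-\phi_K$ once $\phi_K<2/3$. The key simplification here is that $\Psi(1-\pi)=\Psi(\pi)$, so the $\Psi$-contributions to $H(A_0)$ and $H(B_0)$ coincide, and a short computation collapses the chord to $L_{A_0,B_0}(\phi_K)=H(\phi_K/2)+O(\phi_K^2)$ (and symmetrically $L_{A_0,B_0}(1-\phi_K)=H(1-\phi_K/2)+O(\phi_K^2)$). Expanding at $\phi_K$,
\[H(\phi_K)-L_{A_0,B_0}(\phi_K)=\bigl[g(\phi_K)-g(\phi_K/2)\bigr]+2K^{-1}\bigl[|\Psi(\phi_K)|-|\Psi(\phi_K/2)|\bigr]+O(\phi_K^2).\]
Using $|\Psi(\pi)|\sim|\log\pi|$ as $\pi\downarrow 0$, the bracketed $\Psi$-difference tends to $-\log 2$, contributing only $O(K^{-1})$. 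Since $g$ is concave and nontrivial with $g(0)=g(1)=0$, $g'(0+)>0$, and by concavity $g(\phi_K)-g(\phi_K/2)\geq(\phi_K/2)g'(\phi_K)\geq\gamma\phi_K$ for a positive constant $\gamma=\gamma(g)$ and all $\phi_K$ below some threshold $\phi^*=\phi^*(g)$ --- the edge case $g'(0+)=+\infty$ is handled by instead picking any fixed $M>0$ with $g'(\phi)\geq M$ on a neighborhood of $0$. Combining, $H(\phi_K)-L_{A_0,B_0}(\phi_K)\geq\gamma\phi_K-c_1K^{-1}$, which is strictly positive as soon as $\phi_K\sim C^{-1}K^{-(1-\epsilon)}$ dominates $K^{-1}$, i.e., once $K^\epsilon$ is sufficiently large relative to $C/\gamma$. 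An entirely analogous computation at $1-\phi_K$ using $|g'(1-)|>0$ handles the $B^*$ bound.

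One then picks $K_0=K_0(\epsilon)>\beta^{-1}$ large enough that the asymptotic estimates above are valid, and $C=C(K_0,\epsilon)>0$ in a range that simultaneously forces $\phi_K\leq\phi^*$ and $\gamma\phi_K>2c_1K^{-1}$ for all $K\geq K_0$; a short calculation shows that such a range is non-empty once $K_0$ is sufficiently large. The main obstacle will be verifying the $O(\phi_K^2)$ error uniformly, since it involves terms of size $K^{-1}\log(1/\phi_K)$ coming from the blow-up of $H$ near the endpoints; the remaining estimates are routine.
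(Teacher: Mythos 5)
Your proposal is correct but takes a genuinely different route from the paper. The paper's proof works \emph{analytically with the boundary as a function of $K$}: it starts from the explicit derivative formula for $\frac{d}{dK}A^*(K)$ (from the preceding proposition, via the implicit function theorem), bounds the numerator and denominator to obtain $\frac{d}{dK}A^*(K) < -\frac{1-\epsilon}{K}A^*(K)\bigl(1-A^*(K)\bigr)$ for large $K$, and then integrates via a comparison principle for ODEs --- the comparison logistic equation has exactly the solution $1/(1+CK^{1-\epsilon})$. You instead argue \emph{variationally}: using the convex-envelope representation of $V_*$ from Theorem \ref{thm:convex.envelope}, you exhibit an explicit two-sided exit rule $\tau_{A_0,B_0}$ whose cost at $\phi_K$ strictly undercuts $H(\phi_K)$, forcing $\phi_K$ into the continuation region. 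This is an ``inner bound on the continuation region via a test stopping time'' argument, more elementary in that it bypasses the differentiability proposition entirely, at the cost of some asymptotic bookkeeping. One remark on your final worry: the feared $K^{-1}\log(1/\phi_K)$ error does not actually arise. Because you chose $A_0=\phi_K/2$ and $B_0=1-\phi_K/2$ symmetrically and $\Psi(\pi)=\Psi(1-\pi)$, the $\Psi$-part of the chord is \emph{exactly} the constant $\Psi(\phi_K/2)$; the only $\Psi$-contribution to $H(\phi_K)-L_{A_0,B_0}(\phi_K)$ is $-2K^{-1}\bigl[\Psi(\phi_K)-\Psi(\phi_K/2)\bigr]\to -2K^{-1}\log 2$, which is genuinely $O(K^{-1})$ with no logarithmic blow-up. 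The residual chord term $\frac{g(1-\phi_K/2)-g(\phi_K/2)}{1-\phi_K}\cdot\frac{\phi_K}{2}$ is at worst $O(\phi_K^2\log(1/\phi_K))$ and is dominated by $\gamma\phi_K$ once $\phi_K$ is small, so the estimate closes as you outlined, with $C$ chosen in the nonempty range that makes $\phi_{K_0}$ small enough while keeping $\gamma/C > 2c_1/K_0^\epsilon$.
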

	
	\begin{proof}
		We will treat $A^*(K)$ as the estimate for $B^*(K)$ is obtained similarly. Manipulating the expression for the derivative we obtain:
		\[\frac{1}{K^2}\frac{\left[\Psi\left(B^*(K)\right) -\Psi \left(A^*(K) \right) - \Psi '\left(A^*(K)\right)\left(B^*(K)-A^*(K)\right)\right]A^*(K)^2(1-A^*(K))^2}{\left[\mathcal{A}g(A^*(K))+K^{-1}\right]\left(B^*(K)-A^*(K)\right)}.\]
		By concavity of $\Psi$ the numerator is negative and for $0<\pi<\pi_*(K)$, 
		\[K^{-1} >\mathcal{A}g(\pi)+K^{-1}> 0.\]
		With this we obtain that:
		\[\frac{1}{K}\frac{\left[\Psi\left(B^*(K)\right) -\Psi \left(A^*(K) \right) - \Psi '\left(A^*(K)\right)\left(B^*(K)-A^*(K)\right)\right]A^*(K)^2(1-A^*(K))^2}{\left(B^*(K)-A^*(K)\right)}\]
		is an upper bound on the derivative. Now $\Psi< 0$ for all $\pi\not=1/2$ so we further obtain the upper bound
		\[\frac{d}{dK}A^*(K)<\frac{1}{K}\left[-\frac{\Psi \left(A^*(K) \right)}{\left(B^*(K)-A^*(K)\right)} - \Psi '\left(A^*(K)\right)\right]A^*(K)^2(1-A^*(K))^2.\]
		Note that for any $\epsilon'>0$ there is a sufficiently large $K'>0$ such that $B^*(K)-A^*(K)>1-\epsilon'$ for all $K>K'$. Hence, for sufficiently large $K$:
		\[\frac{d}{dK}A^*(K)<\frac{1}{K}\left[-\frac{\Psi \left(A^*(K) \right)}{1-\epsilon'} - \Psi '\left(A^*(K)\right)\right]A^*(K)^2(1-A^*(K))^2.\]
		Now as $\pi\downarrow 0$,
		$-\pi(1-\pi)\Psi(\pi)\downarrow 0$ so again, for all $\epsilon>0$ there is a $K''>K'$ such that if $K>K''$
		\[-\frac{A^*(K)(1-A^*(K))\Psi \left(A^*(K) \right)}{1-\epsilon'}<\epsilon.\]
		On the other hand, for all sufficiently small $\pi$ we have that $-\pi(1-\pi)\Psi'(\pi)<-1$. We conclude that for any $\epsilon>0$ and sufficiently large $K$:
		\[\frac{d}{dK}A^*(K)<-\frac{1-\epsilon}{K}A^*(K)(1-A^*(K)).\]
		The claim in the proposition then follows by a comparison principle for ordinary differential equations.
	\end{proof}
	
	It is not difficult to see that when $g$ is symmetric so $B^*(K)=1-A^*(K)$, this symmetry (and the symmetry of $\Psi$) can be exploited in the proof to take $\epsilon=0$ in the growth estimates of the boundaries.
	
	\begin{remark}
		Alternative bounds on the boundaries for a particular $g$ can be recovered by considering the behavior of the points $\pi_*(K)$ and $\pi^*(K)$. This is since $A^*(K)\leq \pi_*(K)$ and $B^*(K)\geq \pi^*(K)$. In the case when $g$ corresponds to the symmetric cross entropy loss (see Example \ref{ex:g-cross-entropy.scst} with $a_1=a_2=1$) we get the explicit equations:
		\[\pi_*(K)=\frac{1}{2}-\frac {1}{2}\sqrt {1-\frac{8}{K}}, \ \ \ \pi^*(K)=\frac{1}{2}+\frac {1}{2}\sqrt {1-\frac{8}{K}}\]
		It is easy to see that asymptotically these bounds behave comparably to the ones in Proposition \ref{prop:asymp.bds} (once symmetry is used to take $\epsilon=0$). On the other hand when $g$ corresponds to the $L_1$ loss (see Example \ref{ex:g-l1-l2.scst}) we get:
		\[\pi_*(K)=\frac{1}{2}-\frac {1}{2}\sqrt {1-\sqrt{\frac{8}{K}}}, \ \ \ \pi^*(K)=\frac{1}{2}+\frac {1}{2}\sqrt {1-\sqrt{\frac{8}{K}}},\]
		and the asympotic bounds in Proposition \ref{prop:asymp.bds} are superior.
	\end{remark}
	
	%
	%
	
	\section{Solution illustration and comparison with the classic problem}\label{sec:illustration}
	
	In this section we illustrate the solution to the soft classification sequential testing game for the penalty functions induced by the (symmetric) cross entropy and $L_1$ losses (see Examples \ref{ex:g-cross-entropy.scst} and \ref{ex:g-l1-l2.scst}, and take $a_1=a_2=1$ for the cross-entropy loss). We also compare the solutions to those arrived at in the (symmetric) classic setting for the same parameters. The classic penalty function is given in \eqref{eqn:classic.penalty} and we similarly take $a_1=a_2=1$ there.
	
	\begin{figure}[h!]
		\begin{center}
			\includegraphics[width=0.35\textwidth]{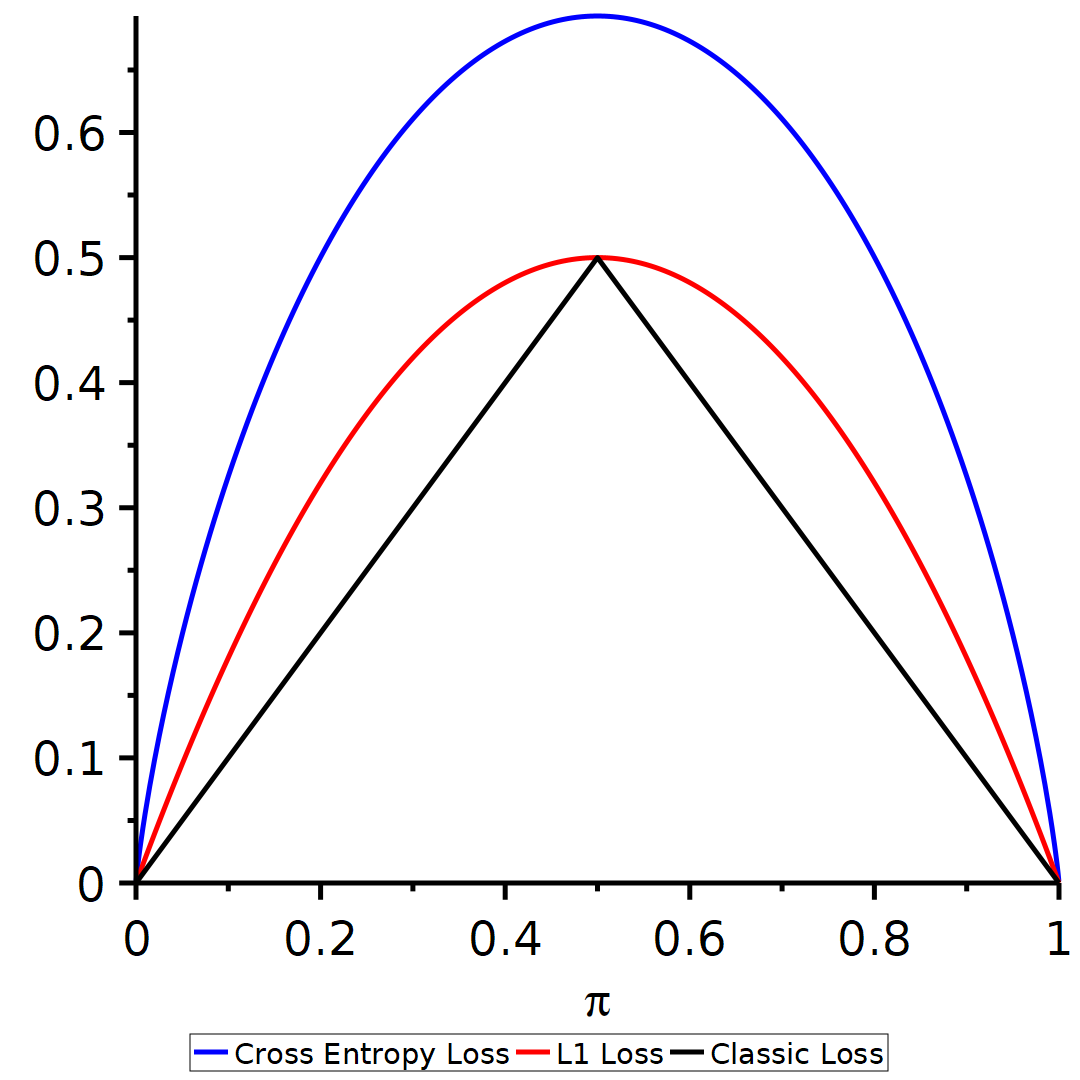}
			\hspace{1cm}
			\includegraphics[width=0.35\textwidth]{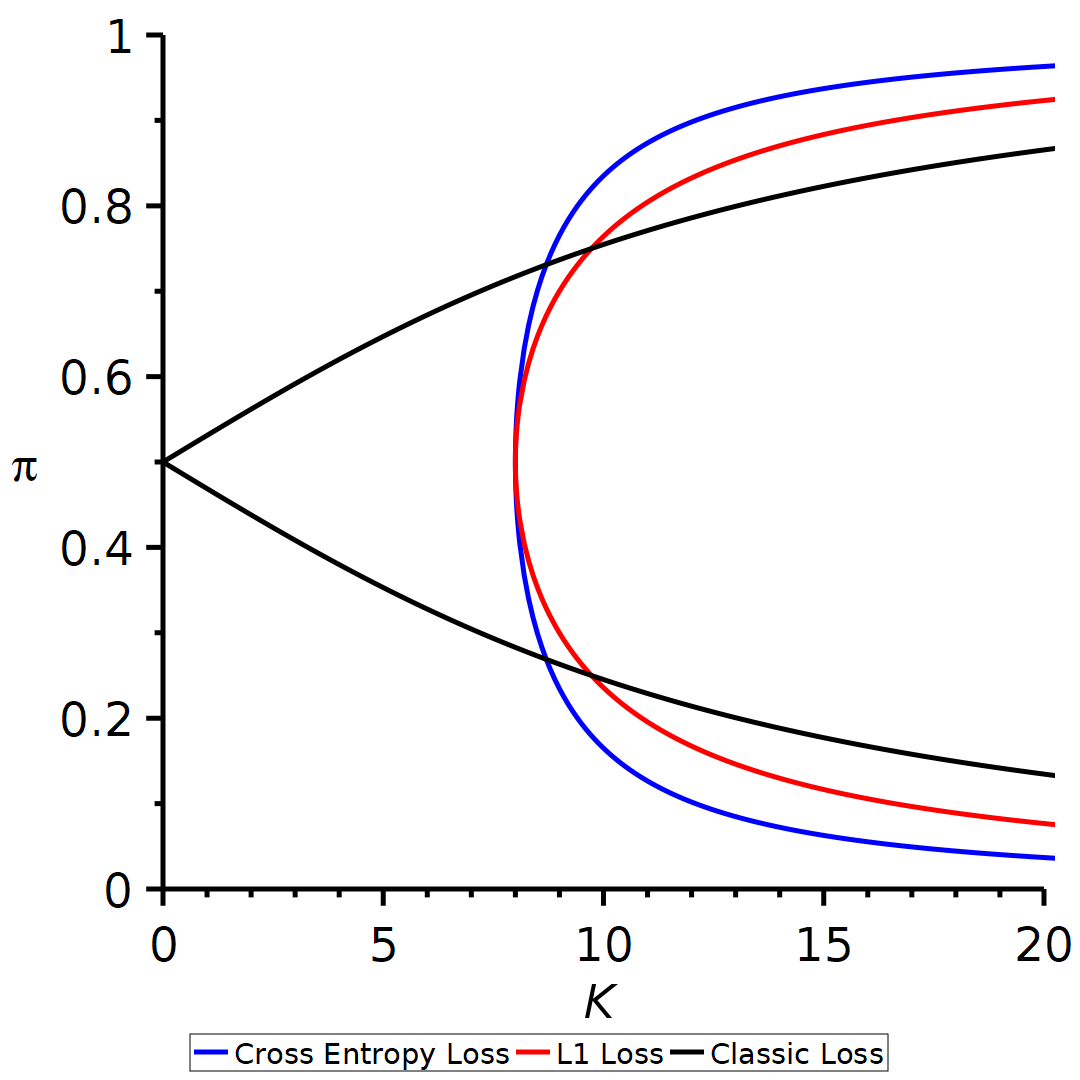}
		\end{center}
		\caption[Penalties $g$ induced by loss functions and the associated optimal stopping boundaries as functions of $K$.]{Penalties $g$ induced by loss functions (left) and the associated optimal stopping boundaries as functions of $K$ (right).}
		\label{fig:solution}
	\end{figure}
	
	Figure \ref{fig:solution} (left) illustrates the three penalty functions induced by each of the losses, and 	Figure \ref{fig:solution} (right) illustrates the corresponding optimal stopping boundaries $A^*$ and $B^*$ as a function of $K$. The upper curves correspond to $B^*$ and the lower curves correspond to $A^*$. The solutions corresponding to each of these losses exhibit qualitatively different behavior. We can observe in Figure \ref{fig:solution} (right) that in accordance with Theorem \ref{thm:verification} the boundaries corresponding to the cross-entropy and $L_1$ losses are well defined only for $K>8$. On the other hand, the classic problem always has a non-empty continuation region and well defined boundaries for any $K>0$.
	
	We can also observe that the boundaries for the $L_1$ loss appear to be contained in the boundaries for the cross-entropy loss. On the other hand, the ordering on the classic boundaries and the soft-classification boundaries switches as $K$ changes. For small $K$ the soft-classification boundaries (both cross-entropy and $L_1$) are contained in the classic boundaries, while this is no longer true for large $K$. Interestingly, if we fix the running cost $c$ this says that for large information ratios it is worth observing the signal process for longer than in the classic problem, while for small information ratios it is often best to make a stopping decision more quickly.
	
	\section{Conclusion}\label{sec:inf.hor.conclusion}
	In this note we have formulated and solved a soft classification version of the famous Bayesian sequential testing problem for a Brownian motion's drift. Our analysis allows us to obtain a semi-explicit solution that mirrors the one arising in the classic setting. By leveraging this solution structure we are also able to obtain a deep understanding of the problem behavior in the information ratio $\alpha/\sigma$. This behavior is then clearly illustrated numerically for several examples of interest. The introduction of these soft classification problems leads to several natural research directions.
	
	\begin{enumerate}
		\item[(i)] Since this problem exhibits interesting and non-trivial behavior in the information ratio, it is potentially worthwhile to investigate how this ratio influences the solutions to other related problems. For instance, the quickest detection problem (c.f. \cite{shiryaev1965some,shiryaev1963optimum}) also depends crucially on this parameter.
		\item[(ii)] When Assumption \ref{ass:g.scst} (G2) is eliminated it is no longer necessary that the continuation region is connected. However, a result akin to Theorem \ref{thm:convex.envelope} should still hold and genuine insights may be generated by studying the types of continuation regions that can arise.
		\item[(iii)] This work has left the finite horizon case $T<\infty$ open.  In view of our results, it is to be expected that the soft classification version of the finite horizon problem also remains as tractable as in the classic formulation.
		\item[(iv)] Since we have observed that the solution is strongly influenced by the information ratio, it is interesting to embed this sequential testing problem into a game setting where players interact and influence the ratio through their actions in pursuit of determining the state $\theta$.
	\end{enumerate}
	Directions (iii) and (iv) are the subject of the companion work \cite{campbell2024mfgseqtest}.

	\section*{Acknowledgment}
	This work was partially supported by an NSERC Alexander Graham Bell Canada Graduate Scholarship (Application No. CGSD3-535625-2019). S. Campbell would like to also thank Ioannis Karatzas, Georgy Gaitsgori and Richard Groenewald for several stimulating discussions during the Optimal Stopping Seminar Series at Columbia University, and for their suggestion to release this short note.
	
	\bibliographystyle{abbrv}
	\bibliography{main.bib}

\end{document}